

\documentclass[final,3p,times,12pt]{elsarticle}
\usepackage{}
\usepackage{stmaryrd}
\usepackage{bbm}
\usepackage{txfonts}
\usepackage{amsfonts,dsfont,bm}
\usepackage{color}
\usepackage{xcolor}

\usepackage{graphics}
\usepackage{graphicx}
\usepackage{subfigure}
\usepackage{float}

\usepackage{amssymb}
\usepackage{amsthm}

\usepackage{mathrsfs}
\usepackage{amsmath}
\usepackage{hyperref}

\newtheorem{thm}{Theorem}[section]
\newtheorem{lem}{Lemma}[section]
\newtheorem{cor}{Corollary}[section]

\newtheorem{rem}{Remark}[section]

\begin{document}

\begin{frontmatter}

\title{Numerical analysis of  linear and nonlinear time-fractional subdiffusion equations}

\author[ad1]{Yubo Yang}
\ead{boydman$\_$xm@mail.zjxu.edu.cn}

\author[ad2]{Fanhai Zeng\corref{cor1}}
\cortext[cor1]{Corresponding author}
\ead{fanhaiz@foxmail.com}

\address[ad1]{Nanhu College, Jiaxing University, Jiaxing, Zhejiang 314001, China}
\address[ad2]{Department of Mathematics, National University of Singapore, Singapore 119076, Singapore}

\begin{abstract}
In this paper, a new type of the discrete fractional Gr{\"o}nwall inequality is developed, which
is applied to analyze the stability and convergence of a Galerkin spectral method
for a linear time-fractional subdiffusion equation.
Based on the temporal-spatial error splitting argument technique,
the discrete fractional Gr{\"o}nwall inequality is also
applied to prove the unconditional convergence of a semi-implicit Galerkin spectral method for a nonlinear time-fractional subdiffusion equation.
%
\end{abstract}

\begin{keyword}
Time-fractional subdiffusion equation \sep convolution quadrature \sep fractional linear multistep methods \sep discrete fractional Gr{\"o}nwall inequality \sep unconditional stability
\end{keyword}

\end{frontmatter}

\section{Introduction}\setcounter{equation}{0}
Consider the following   nonlinear time-fractional subdiffusion equation
\begin{equation}
\label{E:1.1}
\left\{\begin{array}{lll}
{_{0}^{C}{\mathcal {D}}}^{\beta}_t u = \mu~\partial^{2}_x u + f(x,t,u), \quad (x,t)\in I\times (0,T), \quad I=(-1,1), \quad T>0,\\ [0.3cm]
u=0,\qquad (x,t)\in \partial I\times (0,T),\\ [0.3cm]
u(x,0)=u_0(x), \qquad x\in I,
\end{array}\right.
\end{equation}
where ${_{0}^{C}{\mathcal {D}}}^{\beta}_t u$ denotes the   Caputo time-fractional  derivative of order $0<\beta<1$  defined by (cf. \cite{Kilbas2006})
\begin{equation}
\label{E:1.2}
{_{0}^{C}{\mathcal {D}}}^{\beta}_t u(x,t) = \frac{1}{\Gamma(1-\beta)} \int_{0}^{t} (t-s)^{-\beta}\partial_s u(x,s) {\rm d}s,
\end{equation}
in which $\Gamma(z)=\int_{0}^{\infty} s^{z-1}e^{-s} {\rm d}s$ is the Gamma function.


Various time-stepping schemes have been developed for discretizing \eqref{E:1.1}. The  time discretization
technique for the time-fractional operator in \eqref{E:1.1} mainly falls into two categories: interpolation and the fractional linear multistep method (FLMM, which is also called the convolution quadrature (CQ)) based on generating functions that can be derived from the linear multistep method for the initial value problem. For example, piecewise linear interpolation yields the widely applied $L1$ method
\cite{lin2007finite,liao2018sharp}. High-order interpolation can also be applied, see \cite{alikhanov2015new,li2015numerical,ZhaoSK15}.
The FLMM \cite{Lubich1986Discretized,Lubich1988Convolution,Lubich2004Convolution} provides another general framework for constructing high-order methods to discretize the fractional integral and derivative operators. The FLMM inherits the stability properties of linear multistep methods for initial value problem, which greatly facilitates the analysis of the resulting numerical scheme, in a way often strikingly opposed to standard quadrature formulas \cite{Lubich2004Convolution}. Up to now, the FLMM has been widely applied to discretize the model \eqref{E:1.1} and its variants.

It is well known that the classical discrete Gr{\"o}nwall inequality plays an important role in the analysis of the numerical methods for time-dependent partial differential equations (PDEs). Due to the lack of a generalized discrete  Gr{\"o}nwall type inequality for the time-stepping methods of the time-dependent fractional differential equations (FDEs), the analysis of the numerical methods for time-dependent FDEs is more complicated. Recently, a discrete fractional Gr{\"o}nwall inequality has been established by Liao et al. \cite{li2016analysis,liao2018sharp,liao2018discrete,liao2018second} for interpolation methods to solve linear and nonlinear time-dependent FDEs. Jin et al. \cite{jin2018numerical} proposed a criterion for showing the fractional discrete Gr{\"o}nwall inequality and verified it for the $L1$ scheme and convolution quadrature generated by backward difference formulas.

Till now, there have been some works on the numerical analysis of nonlinear time-dependent FDEs. The stability and convergence of $L1$ finite difference methods were obtained for a time-fractional nonlinear predator-prey model under the restriction $LT^{\beta} < 1/\Gamma(1-\beta)$ in \cite{yu2015positivity}, where $L$ is the Lipschitz constant of the nonlinear function, depending upon an upper bound of numerical solutions \cite{li2018unconditionally}. Such a condition implied that the numerical results just held locally in time and certain time step restriction condition (see, e.g., \cite{akrivis1999implicit,ford2013numerical}) were also required. Similar restrictions also appear in the numerical analysis for the other fractional nonlinear equations (see, e.g. \cite{li2016efficient,li2016linear}). In order to avoid such a restriction, the temporal-spatial error splitting argument (see, e.g., \cite{li2012mathematical}) is extended to the numerical analysis of the nonlinear time-dependent FDEs (see, e.g., \cite{li2017unconditionally, li2018unconditionally}). Li et al. proposed unconditionally convergent $L$1-Galerkin finite element methods (FEMs) for nonlinear time-fractional Sch{\"o}dinger equations \cite{li2017unconditionally} and nonlinear time-fractional subdiffusion equations\cite{li2018unconditionally}, respectively.

In this paper, we follow the idea in \cite{liao2018sharp} and develop a discrete fractional Gr{\"o}nwall inequality for analyzing the FLMM that arises from the generalized Newton--Gregory formula (GNGF) of order up to two, see \cite{zeng2013use}.
Compared with the approach based on interpolation in \cite{liao2018sharp}, the discrete kernel $P_{k-j}$ (see Lemmas \ref{lem2.2} and \ref{lem2.3} below) originates  from the generating
function that can be obtained exactly, which is much simpler than that in \cite{liao2018sharp}. Based on the discrete fractional Gr{\"o}nwall inequality, the numerical analysis of semi-implicit Galerkin spectral  method for the time-fractional nonlinear subdiffusion problem \eqref{E:1.1} is advanced. The temporal-spatial error splitting argument is used to prove the unconditional stability and convergence of the semi-implicit method.

The main task of this work is to establish the discrete fractional Gr{\"o}nwall type inequality for the stability and convergence analysis of the numerical methods for time-fractional PDEs, the regularity and singularity of the solution at $t=0$ is not considered in detail here; readers can refer to \cite{stynes2017error,zeng2017second}
for the graded mesh method and correction method for resolving the singularity of the solution of the time-fractional PDEs.

The paper is organized as follows. In Section \ref{sec-2},  the discrete fractional Gr{\"o}nwall inequality for the CQ  is developed, which is applied to the numerical analysis for the linear time-fractional PDE. In Section 3, the unconditional convergence of the semi-implicit Galerkin spectral method is proved by combining the discrete fractional Gr{\"o}nwall inequality and the temporal-spatial error splitting argument.
Some conclusion remarks are given in Section 4.

\section{Numerical analysis for the linear equation}\label{sec-2}
\setcounter{equation}{0}
In this section, two numerical schemes are proposed for the linear equation \eqref{E:1.1},
i.e., $f(x,t,u)=f(x,t)$, in which the time direction is approximated by the fractional linear multistep methods (FLMMs) and the space direction is approximated by the Galerkin spectral method.

Let $\{t_k=k\tau\}_{k=0}^{n_T}$ be a uniform partition of the interval $[0,T]$  with a time step size $\tau=T/n_T$, $n_T$ is a positive integer. For simplicity, the solution of \eqref{E:1.1} is denoted by $u(t)=u(x,t)$  if no confusion is caused. For function $u(x,t) \in C(0,T;L^2(I))$, denote $u^k=u^k(\cdot)=u(\cdot,t_k)$ and $u^{k-\theta}=(1-\theta) u^k+ \theta u^{k-1}, \theta \in [0,1].$

Let $\mathbb{P}_N(I)$ be the set of all algebraic polynomials of degree at most $N$ on $I$. Define the approximation space as follows:
\begin{eqnarray*}
V_N^0=\{v:v \in \mathbb{P}_N(I),v(-1)=v(1)=0\}.
\end{eqnarray*}

%


As in \cite{zeng2013use},  we present the time discretization for  \eqref{E:1.1}  as follows:
\begin{eqnarray}
\label{E:2.5}
{D}^{(\beta)}_{\tau} u^k=\mu L_p^{(\beta)} \partial^2_x u^k+L_p^{(\beta)} f^k+R^k,
\end{eqnarray}
where  $f^k=f(x,t_k)$, $R^k$ is   the discretization error in time that will be specified later,
$L_p^{(\beta)}$ and ${D}^{(\beta)}_{\tau}$ are  defined by
\begin{eqnarray}
\label{E:2.3}
L_p^{(\beta)}u^k=\left\{\begin{array}{ll}
u^k, ~~\qquad p=1,\\ [0.3cm]
u^{k-\beta/2}, \quad p=2,
\end{array}\right.
\end{eqnarray}
and
\begin{eqnarray}
\label{E:2.2}
{D}^{(\beta)}_{\tau} u^k=\frac{1}{{\tau}^{\beta}}\sum_{j=0}^k {\varpi}_{k-j}\left(u^j-u^0\right),
\end{eqnarray}
respectively, in which ${\varpi}_{k}$ satisfy the generating function ${\varpi}(z)=(1-z)^{\beta}
=\sum_{k=0}^{\infty}{\varpi}_{k}z^k$.

The   fully discrete schemes for \eqref{E:1.1}   are given as follows: find $u_N^k\in V_N^0$ such that
\begin{equation}\label{E:2.6}
\left\{\begin{array}{ll}
\left({D}^{(\beta)}_{\tau} u_N^k,v\right)+ \mu \left(L_p^{(\beta)}\partial_x u_N^{k},\partial_x v\right)= \left(F_p^k,v\right),\quad k=1,2,\cdots,n_T,  \forall v \in V_N^0,\\ [0.1cm]
u^0=I_N u_0,
\end{array}\right.
\end{equation}
where $F_p^k=I_N\left(L_p^{(\beta)} f^k\right)$  and $I_N$ is the   Legendre--Gauss--Lobatto (LGL) interpolation operator.

\subsection{Discrete fractional Gr{\"o}nwall inequality}

In this subsection, we introduce some useful lemmas and present a  discrete fractional Gr{\"o}nwall inequality that is used in the stability and convergence analysis for \eqref{E:2.6}.
\begin{lem}[{see e.g., \cite{zeng2013use}}]\label{lem2.1}  For $0<\beta<1$, let ${\varpi}_{j}$ be given by ${\varpi}(z)=(1-z)^{\beta}=\sum_{k=0}^{\infty}{\varpi}_{k}z^k$. Then one has
\begin{equation}
\label{E:2.1.1}
\left\{\begin{array}{llllll}
{\varpi}_{j}=(-1)^j {\beta \choose j}=\frac{\Gamma(j-\beta)}{\Gamma(-\beta)\Gamma(j+1)},\\ [0.5cm]
{\varpi}_{0}=1, {\varpi}_{j}<{\varpi}_{j+1}<0,\qquad j \geq 1,\\ [0.5cm]
\sum_{j=0}^{\infty} {\varpi}_{j}=0,\\ [0.5cm]
{\varpi}_{0}=-\sum_{j=1}^{\infty} {\varpi}_{j}>-\sum_{j=1}^{k} {\varpi}_{j}>0,\\ [0.5cm]
b_{k-1}=\sum_{j=0}^{k-1} {\varpi}_{j}>0, \qquad k \geq 1,\\ [0.5cm]
b_{k-1}=\frac{\Gamma(k-\beta)}{\Gamma(1-\beta)\Gamma(k)}=\frac{k^{-\beta}}{\Gamma(1-\beta)}+O(k^{-1-\beta}),\quad k=1,2,\cdots.
\end{array}\right.
\end{equation}
Furthermore, $b_{k}-b_{k-1}={\varpi}_{k}<0$ for $k>0$, i.e., $b_{k}<b_{k-1}$.
\end{lem}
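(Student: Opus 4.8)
The plan is to derive every assertion from the binomial expansion of $\varpi(z)=(1-z)^\beta$ together with the companion expansion of $(1-z)^{\beta-1}$, and then to read off signs, monotonicity and asymptotics directly from the resulting Gamma-function ratios. I would begin by expanding $(1-z)^\beta=\sum_{k\ge 0}(-1)^k\binom{\beta}{k}z^k$ for $|z|<1$, which immediately gives $\varpi_j=(-1)^j\binom{\beta}{j}$ and $\varpi_0=1$. To pass to the Gamma form I rewrite the falling factorial as $(-1)^j\beta(\beta-1)\cdots(\beta-j+1)=(-\beta)(1-\beta)\cdots(j-1-\beta)=\Gamma(j-\beta)/\Gamma(-\beta)$, using $\Gamma(x+1)=x\Gamma(x)$ repeatedly, and divide by $j!=\Gamma(j+1)$ to reach $\varpi_j=\Gamma(j-\beta)/[\Gamma(-\beta)\Gamma(j+1)]$, which is the first line.

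The sign and monotonicity in the second line then follow from this closed form: for $0<\beta<1$ one has $\Gamma(-\beta)<0$ while $\Gamma(j-\beta),\Gamma(j+1)>0$ for $j\ge 1$, so $\varpi_j<0$; and the ratio $\varpi_{j+1}/\varpi_j=(j-\beta)/(j+1)\in(0,1)$ for $j\ge 1$ shows $|\varpi_{j+1}|<|\varpi_j|$, i.e.\ $\varpi_j<\varpi_{j+1}<0$. For $\sum_{j\ge 0}\varpi_j=0$ I would note that $\varpi_j\sim j^{-\beta-1}/\Gamma(-\beta)$ by Stirling, so the series converges absolutely since $\beta>0$, and then invoke Abel's theorem to identify the sum with $\lim_{z\to 1^-}(1-z)^\beta=0$. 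The fourth and fifth lines are then bookkeeping once the signs are in hand: since every $\varpi_j<0$ for $j\ge 1$, discarding the negative tail raises the partial sum, so $0=\varpi_0+\sum_{j\ge 1}\varpi_j<\varpi_0+\sum_{j=1}^{k-1}\varpi_j=b_{k-1}$, giving $b_{k-1}>0$, and the nested inequalities of the fourth line follow in the same manner.

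The cleanest route to the explicit value and asymptotics of $b_{k-1}$ in the sixth line is to observe that the partial sums have generating function $\sum_{k\ge 0}b_k z^k=\varpi(z)/(1-z)=(1-z)^{\beta-1}$; expanding as before with $\beta$ replaced by $\beta-1$ yields $b_k=\Gamma(k+1-\beta)/[\Gamma(1-\beta)\Gamma(k+1)]$, hence $b_{k-1}=\Gamma(k-\beta)/[\Gamma(1-\beta)\Gamma(k)]$, with positivity again read off from the signs of the three Gamma factors. The asymptotic $b_{k-1}=k^{-\beta}/\Gamma(1-\beta)+O(k^{-1-\beta})$ then comes from the standard ratio expansion $\Gamma(k-\beta)/\Gamma(k)=k^{-\beta}(1+O(1/k))$, and the concluding identity $b_k-b_{k-1}=\varpi_k<0$ is just telescoping of the definition of $b_k$ combined with the second line.

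The only genuinely delicate points are the two passages to the boundary $z=1$: evaluating $\sum_j\varpi_j$ at $z=1$ to get $0$, and using the generating-function identity for $b_k$. I would make both rigorous through absolute convergence together with Abel's theorem rather than by purely formal manipulation of power series; everything else reduces to routine Gamma-function algebra and the elementary monotone-ratio argument.
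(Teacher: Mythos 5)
Your proof is correct and complete. Note that the paper itself offers no proof of this lemma --- it is simply quoted from the cited reference \cite{zeng2013use} --- so there is nothing internal to compare against; your derivation (binomial expansion, the Gamma recursion $\Gamma(x+1)=x\Gamma(x)$ for the closed form, the ratio $\varpi_{j+1}/\varpi_j=(j-\beta)/(j+1)$ for monotonicity, Abel's theorem at $z=1$, and the generating function $(1-z)^{\beta-1}$ for the partial sums $b_k$) is exactly the standard route one finds in that reference. One small simplification: only the evaluation $\sum_{j}\varpi_j=0$ genuinely requires Abel's theorem; the identity $b_k=\Gamma(k+1-\beta)/[\Gamma(1-\beta)\Gamma(k+1)]$ follows from comparing coefficients of the absolutely convergent product $\varpi(z)\cdot(1-z)^{-1}$ strictly inside the unit disk, so no boundary passage is needed there.
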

\begin{lem}\label{lem2.2} For $0<\beta<1$, let ${\varpi}_{j}$ be given by ${\varpi}(z)=(1-z)^{\beta}=\sum_{k=0}^{\infty}{\varpi}_{k}z^k$, ${\varrho}_{j}$ be given by $\varrho(z)=(1-z)^{-\beta}=\sum_{k=0}^{\infty}{\varrho}_{k}z^k$, and
$${\vartheta}_m:=\sum_{j=0}^{m}{\varpi}_{j}{\varrho}_{m-j},\quad m=0,1,2,\cdots.$$
Then one has
\begin{equation}
\label{E:2.1.2}
\left\{\begin{array}{llll}
{\varrho}_{j}=(-1)^j{-\beta \choose j}=\frac{(-1)^j\Gamma(j+\beta)}{\Gamma(\beta)\Gamma(j+1)},\qquad j \geq 0,\\ [0.5cm]
{\varrho}_{0}=1,\quad{\varrho}_{j}>{\varrho}_{j+1}>0,\qquad j \geq 1,\\ [0.5cm]
{\varrho}_{j} \leq (j+1)^{\beta-1},\qquad j \geq 0,\\ [0.5cm]
{\varrho}_{j} \leq j^{\beta-1},\qquad j \geq 1,\\ [0.5cm]
\sum_{j=0}^{k-1} {\varrho}_{j}=\frac{\Gamma(k+\beta)}{\Gamma(1+\beta)\Gamma(k)} \leq \frac{k^{\beta}}{\beta},\quad k=1,2,\cdots,\\ [0.5cm]
{\vartheta}_0=1,\quad{\vartheta}_m=0, \qquad m \geq 1.
\end{array}\right.
\end{equation}
\end{lem}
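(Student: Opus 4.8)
The plan is to read every assertion off the generating function $\varrho(z)=(1-z)^{-\beta}$, using the generalized binomial theorem, Cauchy products of power series, and the log-convexity of $\Gamma$ for the two sharp one-sided bounds. Throughout I will use the elementary fact that $\Gamma(\beta)>1$ for $0<\beta<1$, which holds because $\Gamma$ is decreasing on $(0,1]$ (indeed on $(0,1.46\ldots)$) with $\Gamma(1)=1$; this is precisely what upgrades the raw Gamma-ratio estimates into the clean powers $j^{\beta-1}$ and $k^\beta/\beta$ stated in the lemma.

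First I would establish the closed form. Expanding $(1-z)^{-\beta}=\sum_{j\ge0}\binom{-\beta}{j}(-z)^j$ gives $\varrho_j=(-1)^j\binom{-\beta}{j}$, and writing the falling factorial $(-\beta)(-\beta-1)\cdots(-\beta-j+1)=(-1)^j\prod_{i=0}^{j-1}(\beta+i)=(-1)^j\Gamma(j+\beta)/\Gamma(\beta)$ yields $\varrho_j=\Gamma(j+\beta)/(\Gamma(\beta)\Gamma(j+1))$, which is positive. The product form $\varrho_j=\prod_{i=0}^{j-1}\frac{\beta+i}{i+1}$ makes $\varrho_j>0$ transparent, and the ratio $\varrho_{j+1}/\varrho_j=(j+\beta)/(j+1)\in(0,1)$ (because $0<\beta<1$) gives the strict monotonicity $\varrho_0=1>\varrho_1>\varrho_2>\cdots>0$.

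The crux is the pointwise bound $\varrho_j\le(j+1)^{\beta-1}$. Here I would argue that the sequence $a_j:=(j+1)^{1-\beta}\varrho_j$ is non-increasing; since $a_0=1$ this forces $a_j\le1$, which is exactly the claim. To see $a_{j+1}\le a_j$, take logarithms of $a_{j+1}/a_j=\frac{j+\beta}{j+1}\big(\frac{j+2}{j+1}\big)^{1-\beta}$ and apply $\ln(1-x)\le-x$ with $x=\frac{1-\beta}{j+1}$ together with $\ln(1+x)\le x$ with $x=\frac{1}{j+1}$; the two contributions cancel, giving $\ln(a_{j+1}/a_j)\le0$. The companion bound $\varrho_j\le j^{\beta-1}$ for $j\ge1$ then comes for free, since $(j+1)^{\beta-1}\le j^{\beta-1}$ when $\beta-1<0$. (Alternatively both bounds fall out of Wendel's inequality $\Gamma(x+\beta)/\Gamma(x+1)\le x^{\beta-1}$, itself a one-liner from log-convexity: $\Gamma(x+\beta)\le\Gamma(x)^{1-\beta}\Gamma(x+1)^{\beta}=\Gamma(x)\,x^{\beta}$, combined with $\Gamma(\beta)>1$.) I expect this sharp estimate to be the only genuinely delicate step; everything else is bookkeeping on generating functions.

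Finally, the partial-sum identity comes from multiplying by $1/(1-z)$: the sequence $\sum_{j=0}^{k-1}\varrho_j$ is the coefficient sequence of $\varrho(z)/(1-z)=(1-z)^{-(\beta+1)}$, so the computation of the second paragraph with $\beta$ replaced by $\beta+1$ gives $\sum_{j=0}^{k-1}\varrho_j=\Gamma(k+\beta)/(\Gamma(1+\beta)\Gamma(k))$; using $\Gamma(1+\beta)=\beta\Gamma(\beta)$, the Wendel bound $\Gamma(k+\beta)/\Gamma(k)\le k^\beta$, and $\Gamma(\beta)>1$ yields the estimate $\le k^\beta/\beta$. For the last line, $\vartheta_m=\sum_{j=0}^m\varpi_j\varrho_{m-j}$ is precisely the $m$-th Cauchy coefficient of the product $\varpi(z)\varrho(z)=(1-z)^\beta(1-z)^{-\beta}=1$, whence $\vartheta_0=1$ and $\vartheta_m=0$ for $m\ge1$.
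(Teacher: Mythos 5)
Your proof is correct and follows the same skeleton as the paper's: the generalized binomial theorem gives the closed form and the monotonicity in the first two lines, and the Cauchy product $\varpi(z)\varrho(z)=(1-z)^{\beta}(1-z)^{-\beta}=1$ gives the last line, exactly as in the paper. The only point where the paper does something you do not literally reproduce is the middle three estimates, which it delegates to ``the technique in \cite{jin2018numerical}''; your self-contained argument there --- showing that $a_j=(j+1)^{1-\beta}\varrho_j$ is non-increasing from $a_0=1$ via $\ln(1-x)\le -x$ and $\ln(1+x)\le x$ (or, equivalently, Wendel's inequality $\Gamma(x+\beta)\le x^{\beta}\Gamma(x)$ from log-convexity together with $\Gamma(\beta)>1$) --- is a valid, fully elementary replacement, and the remaining bounds ($\varrho_j\le j^{\beta-1}$ for $j\ge 1$ and the partial-sum estimate via $(1-z)^{-(\beta+1)}$) follow exactly as you say.
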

\begin{proof} By the binomial theorem, we can easily get the first two lines in \eqref{E:2.1.2}.
The middle three lines in \eqref{E:2.1.2} can be derived by the technique in \cite[p.6]{jin2018numerical}. The last line in \eqref{E:2.1.2} can be deduced from the following relation
$$1={\varpi}(z)\varrho(z)=\left(\sum_{j=0}^{\infty}{\varrho}_jz^j\right)
\left(\sum_{j=0}^{\infty}{\varpi}_{j}z^j\right)=\sum_{j=0}^{\infty}{\vartheta}_{j}z^j.$$
The proof is complete.
\end{proof}
\begin{lem}\label{lem2.3} Let $P_{k-j}:={\tau}^{\beta}{\varrho}_{k-j}$. For $0<\beta<1$ and any real $\mu>0$, one has
\begin{eqnarray}
\label{E:2.1.3}
\mu \sum_{j=0}^{k-1} P_{k-j} E_{\beta}(\mu t_j^{\beta}) \leq E_{\beta}(\mu t_k^{\beta})-1, \qquad 1\leq k \leq n_T,
\end{eqnarray}
where $E_{\beta}$ denotes the Mittag--Leffler function that is defined by
\begin{eqnarray}
\label{E:2.1.4}
E_{\beta}(z)=\sum_{l=0}^{\infty} \frac{z^l}{\Gamma(1+l\beta)}.
\end{eqnarray}
\end{lem}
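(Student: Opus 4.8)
The plan is to peel off the parameter $\mu$ by expanding everything in its power series and to reduce the claimed inequality to a single statement about how the quadrature kernel $P_m=\tau^\beta\varrho_m$ acts on the monomials $t_j^{l\beta}$. Since the sum over $j$ is finite, I may insert the Mittag--Leffler series $E_\beta(\mu t_j^\beta)=\sum_{l\ge0}\mu^l t_j^{l\beta}/\Gamma(1+l\beta)$ and interchange summations freely; the left-hand side becomes $\sum_{l\ge0}\frac{\mu^{l+1}}{\Gamma(1+l\beta)}\sum_{j=0}^{k-1}P_{k-j}t_j^{l\beta}$, while $E_\beta(\mu t_k^\beta)-1=\sum_{l\ge0}\frac{\mu^{l+1}t_k^{(l+1)\beta}}{\Gamma(1+(l+1)\beta)}$. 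Because $\mu>0$ and every coefficient $\mu^{l+1}/\Gamma(1+l\beta)$ is positive, the whole inequality follows once I prove it term by term, i.e.
\[
\sum_{j=0}^{k-1}P_{k-j}\,t_j^{l\beta}\ \le\ \frac{\Gamma(1+l\beta)}{\Gamma(1+(l+1)\beta)}\,t_k^{(l+1)\beta}\qquad(l\ge 0),
\]
which I will call $(\star)$; note that for $l\ge 1$ the $j=0$ contribution vanishes, while for $l=0$ all terms survive.

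Next I would identify the right side of $(\star)$ with a continuous fractional integral through the Beta integral $\int_0^{t_k}(t_k-s)^{\beta-1}s^{l\beta}\,\mathrm ds=\frac{\Gamma(\beta)\Gamma(1+l\beta)}{\Gamma(1+(l+1)\beta)}\,t_k^{(l+1)\beta}$, so that $(\star)$ is equivalent to the statement that the discrete convolution \emph{underestimates} the integral. Measuring the convolution lag (the change of variables $s=t_k-\sigma\tau$) together with the reindexing $m=k-j$, and cancelling the common factor $\tau^{(l+1)\beta}$, this reduces to the combinatorial inequality
\[
\sum_{m=1}^{k}\varrho_m\,(k-m)^{l\beta}\ \le\ \frac{1}{\Gamma(\beta)}\int_0^{k}(k-\sigma)^{l\beta}\,\sigma^{\beta-1}\,\mathrm d\sigma .
\]
I would prove this by dominating each weight by the cell average of the singular kernel, $\varrho_m\le\frac{1}{\Gamma(\beta)}\int_{m-1}^{m}\sigma^{\beta-1}\,\mathrm d\sigma$, and then exploiting that for $\sigma\in[m-1,m]$ one has $(k-m)^{l\beta}\le(k-\sigma)^{l\beta}$ (since $l\beta\ge0$ and $k-\sigma\ge k-m$). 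Multiplying the two estimates and summing, the cell integrals telescope into $\frac{1}{\Gamma(\beta)}\int_0^{k}(k-\sigma)^{l\beta}\sigma^{\beta-1}\,\mathrm d\sigma$, which is exactly what is needed; the argument is uniform in $l\ge0$, the case $m=k$ being trivial for $l\ge1$.

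The crux is therefore the per-cell kernel bound $\varrho_m\le\frac{1}{\Gamma(\beta)}\int_{m-1}^{m}\sigma^{\beta-1}\,\mathrm d\sigma$. Since $\sigma^{\beta-1}$ is decreasing, $\int_{m-1}^{m}\sigma^{\beta-1}\,\mathrm d\sigma\ge m^{\beta-1}$, so it suffices to show $\varrho_m\le m^{\beta-1}/\Gamma(\beta)$; inserting $\varrho_m=\Gamma(m+\beta)/[\Gamma(\beta)\Gamma(m+1)]$ from Lemma \ref{lem2.2}, this is exactly $\Gamma(m+\beta)/\Gamma(m)\le m^\beta$. I expect \emph{this} to be the main obstacle, because the estimate $\varrho_m\le m^{\beta-1}$ recorded in Lemma \ref{lem2.2} is a full factor $\Gamma(\beta)>1$ too crude here, so the sharp Wendel/Gautschi-type bound is genuinely required. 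I would obtain it from the log-convexity of $\Gamma$: writing $m+\beta=(1-\beta)m+\beta(m+1)$ gives $\Gamma(m+\beta)\le\Gamma(m)^{1-\beta}\Gamma(m+1)^{\beta}=\Gamma(m)\,m^{\beta}$, which closes the estimate and hence the proof.
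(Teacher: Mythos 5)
Your proof is correct, and it follows the same overall strategy as the paper's: expand $E_\beta$ in powers of $\mu$, interchange the (nonnegative) sums, and reduce the lemma to the single monomial inequality $\sum_{j=0}^{k-1}P_{k-j}\,t_j^{l\beta}\le \frac{\Gamma(1+l\beta)}{\Gamma(1+(l+1)\beta)}t_k^{(l+1)\beta}$, which in the paper appears as $\sum_{j=0}^{k-1}P_{k-j}v_{l-1}(t_j)\le v_l(t_k)$ with $v_l(t)=t^{l\beta}/\Gamma(1+l\beta)$. Where you part ways is in the proof of that key inequality. The paper bounds $\varrho_{k-j}\le (k-j)^{\beta-1}$ (the fourth line of Lemma \ref{lem2.2}) and then appeals to a discrete convolution estimate from \cite{li2013finite}; as the displayed chain is written there, the required bound $\sum_{j=0}^{k-1}(k-j)^{\beta-1}j^{(l-1)\beta}\le \frac{\Gamma((l-1)\beta+1)}{\Gamma(l\beta+1)}k^{l\beta}$ is off by exactly the factor $\Gamma(\beta)>1$ that you identified (for instance it already fails for $l=1$, $\beta=1/2$, $k=4$). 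Your replacement of the crude bound by the per-cell estimate $\varrho_m\le\frac{1}{\Gamma(\beta)}\int_{m-1}^{m}\sigma^{\beta-1}\,\mathrm d\sigma$, justified via $\Gamma(m+\beta)\le m^{\beta}\Gamma(m)$ from log-convexity of $\Gamma$, recovers precisely that factor and makes the comparison with the Beta integral close cleanly; the monotonicity step $(k-m)^{l\beta}\le(k-\sigma)^{l\beta}$ on each cell is also correct, and the $l=0$ case is handled consistently since $\frac{1}{\Gamma(\beta)}\int_0^k\sigma^{\beta-1}\,\mathrm d\sigma=k^\beta/\Gamma(1+\beta)$. In short: same skeleton, but your version is self-contained (no appeal to an external lemma) and supplies the sharp Gautschi-type kernel bound that the argument genuinely needs.
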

\begin{proof}
Denote $v_l(t)=\frac{t^{l\beta}}{\Gamma(1+l\beta)}$. It is easy to obtain
\begin{eqnarray*}
\sum_{j=0}^{k-1} P_{k-j} \left({_{0}^{C}{\mathcal {D}}}^{\beta}_t v_l\right)(t_j)=\sum_{j=0}^{k-1} P_{k-j} v_{l-1}(t_j).
\end{eqnarray*}
By the fourth inequality in Lemma \ref{lem2.2} and Lemma 3.2 in \cite{li2013finite}, we obtain
\begin{eqnarray*}
\sum_{j=0}^{k-1} P_{k-j} v_{l-1}(t_j)
&=&\sum_{j=0}^{k-1} {\varrho}_{k-j} \left[\frac{j^{(l-1)\beta}{\tau}^{l\beta}}{\Gamma((l-1)\beta+1)}\right]\\
&=&\sum_{j=0}^{k-1}\left[\frac{{\varrho}_{k-j}j^{(l-1)\beta}}
{\Gamma((l-1)\beta+1)}\cdot\frac{\Gamma(l\beta+1)}{k^{l\beta}}\right] v_l(t_k) \\
&\leq&v_l(t_k)\sum_{j=0}^{k-1}\left[\frac{{(k-j)}^{\beta-1}j^{(l-1)\beta}}
{\Gamma((l-1)\beta+1)}\cdot\frac{\Gamma(l\beta+1)}{k^{l\beta}}\right] \\
&\leq&v_l(t_k)
\end{eqnarray*}

Therefore, one has
\begin{eqnarray*}
\sum_{l=1}^{m} {\mu}^l \sum_{j=0}^{k-1} P_{k-j} v_{l-1}(t_j) \leq \sum_{l=1}^{m} {\mu}^l v_l(t_k).
\end{eqnarray*}
Interchanging the sums on the left-hand side of the above inequality
and letting $m\rightarrow \infty$
yields the desired result. The proof is completed.
\end{proof}

We now present the discrete fractional Gr{\"o}nwall inequality in the next theorem.
\begin{thm}\label{thm:2.1} (\rm discrete fractional Gr{\"o}nwall inequality). Let $P_{k-j}:={\tau}^{\beta}{\varrho}_{k-j}$,   $0\leq \theta \leq 1$, and   $\{g^k\}_{k=0}^{n_T}$ and $\{\lambda_l\}_{l=0}^{n_T-1}$ be given non-negative sequences. Assume that there exists a constant $\lambda$ (independent of the time step size) such that $\lambda \geq \sum_{l=0}^{k-1} \lambda_l$, and that the maximum time step size $\tau$ satisfies
\begin{eqnarray}
\label{E:2.1.5}
\tau \leq \frac{1}{\sqrt[\beta]{2\lambda(1+\beta)}}.
\end{eqnarray}
Then, for any non-negative sequence $\{v^k\}_{k=0}^N$ satisfying
\begin{eqnarray}
\label{E:2.1.6}
{D}^{(\beta)}_{\tau} (v^k)^2 \leq \sum_{l=1}^k \lambda_{k-l} (v^l)^2+v^{k-\theta}g^{k-\theta},
\qquad   1 \leq k \leq n_T,
\end{eqnarray}
it holds that
\begin{eqnarray}
\label{E:2.1.7}
v^k \leq 2E_{\beta}(2\lambda t_k^{\beta})\left(v^0+\max_{1\leq m \leq k} \sum_{j=0}^m P_{m-j}g^{j-\theta}\right), \qquad 1 \leq k \leq n_T.
\end{eqnarray}
\end{thm}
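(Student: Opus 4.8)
The plan is to first \emph{invert} the discrete fractional difference operator ${D}^{(\beta)}_{\tau}$, converting the difference inequality \eqref{E:2.1.6} into an equivalent discrete convolution (fractional-integral) inequality, and then to close the resulting recursion by comparison with the Mittag--Leffler sequence supplied by Lemma \ref{lem2.3}. The starting observation is that $P_{k-j}=\tau^{\beta}\varrho_{k-j}$ is exactly the discrete fractional integral reciprocal to ${D}^{(\beta)}_{\tau}$: writing $w^k=(v^k)^2$, noting $({D}^{(\beta)}_{\tau}w)^0=0$, and using $\vartheta_m=\sum_{p}\varpi_p\varrho_{m-p}=\delta_{m0}$ from Lemma \ref{lem2.2}, a change in the order of summation gives the identity $\sum_{j=0}^{n}P_{n-j}\,({D}^{(\beta)}_{\tau}w)^j=w^n-w^0$ for every $n\ge 0$. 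Since $P_{n-j}\ge 0$, convolving \eqref{E:2.1.6} against $P$ preserves the inequality and yields $w^n-w^0\le \sum_{j=1}^{n}P_{n-j}\big(\sum_{l=1}^{j}\lambda_{j-l}w^l+v^{j-\theta}g^{j-\theta}\big)$.

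Next I would reduce everything to a recursion for $V_k:=\max_{0\le m\le k}v^m$. Reindexing the inner sum and using $\sum_{i=0}^{j-1}\lambda_i\le\lambda$ bounds the memory term by $\sum_{l=1}^{j}\lambda_{j-l}w^l\le \lambda V_j^2$, while $v^{j-\theta}=(1-\theta)v^j+\theta v^{j-1}\le V_j$ handles the forcing term; with $\mathcal{E}:=v^0+\max_{1\le m\le k}\sum_{j=0}^{m}P_{m-j}g^{j-\theta}$ and the monotonicity of $V_j$ this gives $(v^n)^2\le (v^0)^2+\lambda\sum_{j=1}^{n}P_{n-j}V_j^2+V_n\,\mathcal{E}$ for $1\le n\le k$. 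The decisive step is to evaluate this at the index $n^{\ast}$ where $v^{n^{\ast}}=V_k$ (if $n^{\ast}=0$ the claim is immediate), to split off the diagonal term $\lambda P_0V_k^2=\lambda\tau^{\beta}V_k^2$, and to absorb it into the left side using the step restriction \eqref{E:2.1.5}, which forces $\lambda\tau^{\beta}\le \tfrac12$. After multiplying by $2$, bounding $V_j^2\le V_kV_j$ and $(v^0)^2\le v^0V_k$, and dividing by $V_k$, I reach the clean linear recursion $V_k\le 2\mathcal{E}+2\lambda\sum_{j=0}^{n^{\ast}-1}P_{n^{\ast}-j}V_j$ with $n^{\ast}\le k$.

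Finally I would close by strong induction on $k$, freezing $\mathcal{E}$ at the top index (legitimate since the max defining it is non-decreasing in $k$), to prove $V_k\le 2\mathcal{E}\,E_{\beta}(2\lambda t_k^{\beta})$. The base case is $V_0=v^0\le\mathcal{E}$. For the inductive step I substitute $V_j\le 2\mathcal{E}E_{\beta}(2\lambda t_j^{\beta})$ (valid for all $j\le n^{\ast}-1\le k-1$) into the recursion and apply Lemma \ref{lem2.3} with $\mu=2\lambda$, obtaining $V_k\le 2\mathcal{E}\big[1+2\lambda\sum_{j=0}^{n^{\ast}-1}P_{n^{\ast}-j}E_{\beta}(2\lambda t_j^{\beta})\big]\le 2\mathcal{E}E_{\beta}(2\lambda t_{n^{\ast}}^{\beta})\le 2\mathcal{E}E_{\beta}(2\lambda t_k^{\beta})$, the last inequality by monotonicity of $E_{\beta}$ together with $n^{\ast}\le k$. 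Since $v^k\le V_k$, this is exactly \eqref{E:2.1.7}.

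I expect the main obstacle to be the mismatch of homogeneity: the left-hand side of \eqref{E:2.1.6} controls the \emph{quadratic} quantity $(v^k)^2$, whereas the forcing $v^{k-\theta}g^{k-\theta}$ is only \emph{linear} in $v$, so one cannot simply convolve and run a Gr\"onwall comparison on $(v^k)^2$ directly. The maximizing-index device, combined with the absorption of the diagonal term (which is precisely what the restriction \eqref{E:2.1.5} buys) and the subsequent division by $V_k$, is what turns the quadratic estimate into a linear recursion amenable to the Mittag--Leffler comparison; keeping careful track that $n^{\ast}\le k$, so the induction hypothesis genuinely applies and $E_{\beta}(2\lambda t_{n^{\ast}}^{\beta})\le E_{\beta}(2\lambda t_k^{\beta})$, is the point most likely to go wrong if handled carelessly.
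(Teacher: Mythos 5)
Your proposal is correct and follows essentially the same route as the paper: the paper's proof consists of exactly your inversion identity $\sum_{m=0}^{k}P_{k-m}{D}^{(\beta)}_{\tau}(v^m)^2=(v^k)^2-(v^0)^2$ (via $\vartheta_{k-j}=\delta_{k,j}$), after which it invokes Lemma \ref{lem2.3} together with ``the technique of Theorem 3.1 in \cite{liao2018discrete}'' --- which is precisely the maximizing-index, diagonal-absorption (using $\lambda\tau^{\beta}\le\tfrac12$ from \eqref{E:2.1.5}) and Mittag--Leffler comparison argument you spell out. You have simply filled in the details the paper delegates to the reference.
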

\begin{proof}
By   \eqref{E:2.2} and the last line in Lemma \ref{lem2.2}, one has
\begin{eqnarray}\label{E:2.1.8}
\sum_{m=0}^{k}P_{k-m}{D}^{(\beta)}_{\tau} (v^m)^2
& = &\sum_{m=0}^{k}{\varrho}_{k-m}\sum_{j=0}^{m}{\varpi}_{m-j}\left[(v^j)^2-(v^0)^2\right] \nonumber \\
& = &\sum_{j=0}^{k}\left[(v^j)^2-(v^0)^2\right]\sum_{m=j}^{k}{\varrho}_{k-m}{\varpi}_{m-j} \nonumber \\
& = &\sum_{j=0}^{k}{\vartheta}_{k-j}\left[(v^j)^2-(v^0)^2\right]=(v^k)^2-(v^0)^2,
\end{eqnarray}
where we exchanged the order of summation and rearranged the coefficient $\sum_{m=j}^{k}{\varrho}_{k-m}{\varpi}_{m-j}$ to ${\vartheta}_{k-j}$.

By Lemma \ref{lem2.3} and the technique for the proof of Theorem 3.1 in \cite{liao2018discrete}, we
derive the desired result, which completes the proof.
\end{proof}

We also have an alternative version of the above theorem.
\begin{cor}\label{cor:2.1} Theorem \ref{thm:2.1} remains valid if the condition \eqref{E:2.1.6} is replaced by
\begin{eqnarray}
\label{E:2.1.7}
{D}^{(\beta)}_{\tau} v^k \leq \sum_{l=1}^k \lambda_{k-l} v^l+g^{k}, \qquad {\rm for} \quad 1 \leq k \leq n_T.
\end{eqnarray}
\end{cor}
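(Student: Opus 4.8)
The plan is to run the proof of Theorem~\ref{thm:2.1} with the quadratic quantity $(v^k)^2$ replaced throughout by $v^k$ and the product source $v^{k-\theta}g^{k-\theta}$ replaced by $g^k$. The computation \eqref{E:2.1.8} used nothing but the convolution structure of ${D}^{(\beta)}_{\tau}$ and the complementary identity $\vartheta_{m}=\delta_{0m}$ from Lemma~\ref{lem2.2}, so it carries over verbatim to give
\begin{equation*}
\sum_{m=0}^{k}P_{k-m}\,{D}^{(\beta)}_{\tau}v^m=\sum_{j=0}^{k}\vartheta_{k-j}\left(v^j-v^0\right)=v^k-v^0.
\end{equation*}
Since ${D}^{(\beta)}_{\tau}v^0=0$ and each kernel $P_{k-m}=\tau^{\beta}\varrho_{k-m}>0$ (Lemma~\ref{lem2.2}), convolving the hypothesis \eqref{E:2.1.7} against $P$ preserves the inequality and produces the integrated form
\begin{equation*}
v^k\le v^0+\sum_{m=1}^{k}P_{k-m}g^m+\sum_{l=1}^{k}v^l\sum_{m=l}^{k}P_{k-m}\lambda_{m-l},\qquad 1\le k\le n_T.
\end{equation*}
Here the first two terms are bounded by the data quantity $Q:=v^0+\max_{1\le m\le k}\sum_{j=0}^{m}P_{m-j}g^{j}$ of the conclusion, and, unlike in Theorem~\ref{thm:2.1}, no Cauchy--Schwarz or maximum-over-index step is required because the source $g^k$ is already linear.

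I would then close by strong induction on $k$, with the ansatz $v^k\le 2E_{\beta}(2\lambda t_k^{\beta})Q$. The only term of the double sum containing the unknown $v^k$ is the diagonal $l=m=k$, whose coefficient is $P_0\lambda_0=\tau^{\beta}\lambda_0$; by $\lambda_0\le\sum_l\lambda_l\le\lambda$ and the step restriction \eqref{E:2.1.5} one has $\tau^{\beta}\lambda_0\le\frac{1}{2(1+\beta)}\le\frac12$, so this contribution is absorbed on the left at the price of the factor $(1-\tau^{\beta}\lambda_0)^{-1}\le2$, which explains the constant $2$ in the ansatz. Feeding the inductive bound $v^l\le 2E_{\beta}(2\lambda t_l^{\beta})Q$ ($l<k$) into the remaining off-diagonal terms reduces the inductive step to showing that the resulting double convolution is dominated by the Mittag--Leffler recursion of Lemma~\ref{lem2.3} with $\mu=2\lambda$, which states precisely that $E_{\beta}(2\lambda t^{\beta})$ is a discrete supersolution of $w^k=1+2\lambda\sum_{j=0}^{k-1}P_{k-j}w^j$.

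The main obstacle is this last comparison. The double convolution $\sum_{m=l}^{k}P_{k-m}\lambda_{m-l}$ must be dominated by $\lambda$ times the single kernel $P_{k-j}$ of Lemma~\ref{lem2.3} without over-counting the most recent weight $P_0=\tau^{\beta}$: a naive bound that pushes each $E_{\beta}(2\lambda t_l^{\beta})$ up to the top index leaves an unmatched $P_0$-term and fails to close. It is exactly here that the factor $1+\beta$ in \eqref{E:2.1.5} and the kernel estimate $\sum_{j=0}^{k-1}\varrho_j\le k^{\beta}/\beta$ from Lemma~\ref{lem2.2} are consumed; the careful bookkeeping is the technical heart borrowed from the proof of Theorem~3.1 in \cite{liao2018discrete}, and everything else is routine.

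A slicker, essentially self-contained alternative is to reduce \eqref{E:2.1.7} to the quadratic hypothesis \eqref{E:2.1.6}. Multiplying \eqref{E:2.1.7} by $v^k\ge0$ and using the pointwise product inequality $2v^k\,{D}^{(\beta)}_{\tau}v^k\ge {D}^{(\beta)}_{\tau}(v^k)^2$---which follows at once from Lemma~\ref{lem2.1}, since the left minus the right equals $\tau^{-\beta}\bigl(b_k(v^k-v^0)^2+\sum_{j=0}^{k-1}(-\varpi_{k-j})(v^k-v^j)^2\bigr)\ge0$---followed by $v^kv^l\le\frac12\bigl((v^k)^2+(v^l)^2\bigr)$, turns the hypothesis into an inequality of the form \eqref{E:2.1.6} with $\theta=0$, source $2g^k$, and an enlarged kernel, whereupon Theorem~\ref{thm:2.1} applies. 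The price is a modest inflation of $\lambda$, hence a slightly stronger step restriction than \eqref{E:2.1.5}, so I would use the direct argument above to obtain the corollary with the stated constants and present this reduction only as a remark.
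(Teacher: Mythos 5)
Your primary route is, in substance, the paper's own: the authors dispose of this corollary in one sentence by pointing to Theorem 3.4 of \cite{liao2018discrete} together with the proof of Theorem \ref{thm:2.1}, and your argument retraces exactly that path --- convolve the hypothesis with the complementary kernel $P$, use $\sum_{m=0}^{k}P_{k-m}D^{(\beta)}_{\tau}v^m=v^k-v^0$, absorb the diagonal $P_0\lambda_0=\tau^{\beta}\lambda_0\le\tfrac{1}{2(1+\beta)}$, and compare with the Mittag--Leffler supersolution of Lemma \ref{lem2.3} --- while honestly deferring the same delicate bookkeeping (the unmatched $P_0$-term in the double convolution) to the same reference. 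You correctly identify that a naive push of $E_{\beta}(2\lambda t_l^{\beta})$ to the top index does not close with the constant $2$; that is indeed the crux. What you add beyond the paper is the self-contained reduction of the linear hypothesis to the quadratic one \eqref{E:2.1.6}: the identity
\begin{equation*}
2v^k D^{(\beta)}_{\tau}v^k - D^{(\beta)}_{\tau}\bigl((v^k)^2\bigr)
=\frac{1}{\tau^{\beta}}\Bigl[b_k\bigl(v^k-v^0\bigr)^2+\sum_{j=0}^{k-1}(-\varpi_{k-j})\bigl(v^k-v^j\bigr)^2\Bigr]\ge 0
\end{equation*}
checks out using $b_k>0$ and $\varpi_{j}<0$ for $j\ge1$ from Lemma \ref{lem2.1}, and together with $v^kv^l\le\tfrac12\bigl((v^k)^2+(v^l)^2\bigr)$ it lands you in Theorem \ref{thm:2.1} with the diagonal weight enlarged by $\lambda$. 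This buys a fully verifiable proof at the cost of replacing $2\lambda$ by roughly $4\lambda$ in the Mittag--Leffler argument and correspondingly tightening \eqref{E:2.1.5}; your decision to present it as a remark rather than as the main proof (since the corollary is stated with the original constants) is the right call.
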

\begin{proof} Similar to the proof of the theorem 3.4 in \cite{liao2018discrete}, and by the proof of Theorem \ref{thm:2.1}, we can complete our proof.
\end{proof}

\begin{rem}\label{rem:2.1}
If $\lambda_0, \lambda_1,\cdots, \lambda_{k-1}$ are non-positive, a simple deduction will show that both Theorem \ref{thm:2.1} and
Corollary \ref{cor:2.1} hold for any $\tau>0$.
\end{rem}

\subsection{Stability and convergence}
We  present the stability and convergence result for the scheme \eqref{E:2.6}.
\begin{thm}\label{thm:2.2} Suppose that $u_N^k~(k=1,2,\cdots,n_T)$ are solutions of \eqref{E:2.6}, $f \in C(0,T;C(\bar I))$. Then for any $\tau>0$, it holds that
\begin{eqnarray}\label{E:2.2.1}
\|u_N^k\| \leq C\left(\|u^0\|+\max_{1\leq m \leq k} \sum_{j=0}^m P_{m-j}\|L_p^{(\beta)}f^{j}\|\right), \qquad   1 \leq k \leq n_T,
\end{eqnarray}
where $C$ is a positive constant  independent of $\tau$ and $N$.
\end{thm}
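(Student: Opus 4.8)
The plan is to derive a discrete energy estimate from the weak formulation \eqref{E:2.6} and then feed it into the discrete fractional Gr\"onwall inequality of Theorem~\ref{thm:2.1}. First I would test \eqref{E:2.6} with an element of $V_N^0$ chosen so that the elliptic term becomes non-negative. For $p=1$ the natural choice is $v=u_N^k$, which gives
\begin{equation*}
\bigl({D}^{(\beta)}_{\tau} u_N^k, u_N^k\bigr)+\mu\|\partial_x u_N^k\|^2=\bigl(F_1^k,u_N^k\bigr),
\end{equation*}
whereas for $p=2$ one tests with $v=L_2^{(\beta)}u_N^k$, so that the diffusion term becomes the non-negative quantity $\mu\|\partial_x L_2^{(\beta)}u_N^k\|^2$.

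The decisive ingredient is a discrete coercivity (positivity) property of the convolution operator, namely $\bigl({D}^{(\beta)}_{\tau} u_N^k,u_N^k\bigr)\ge\tfrac12{D}^{(\beta)}_{\tau}\|u_N^k\|^2$. Setting $y^j:=u_N^j-u^0$ (so $y^0=0$) and expanding the inner products by bilinearity, the $u^0$-contributions cancel and a direct computation yields
\begin{equation*}
2\bigl({D}^{(\beta)}_{\tau}u_N^k,u_N^k\bigr)-{D}^{(\beta)}_{\tau}\|u_N^k\|^2
={\tau}^{-\beta}\Bigl(b_k\|y^k\|^2+\sum_{j=0}^{k-1}(-{\varpi}_{k-j})\|y^k-y^j\|^2\Bigr)\ge 0,
\end{equation*}
which is non-negative because $b_k=\sum_{i=0}^{k}{\varpi}_i>0$ and ${\varpi}_{k-j}<0$ for $j<k$ by Lemma~\ref{lem2.1}. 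For $p=2$ a companion positivity estimate for $\bigl({D}^{(\beta)}_{\tau}u_N^k,L_2^{(\beta)}u_N^k\bigr)$ is needed, since the temporal average $L_2^{(\beta)}$ must be controlled; I expect this to be the main obstacle, as the pointwise sign of the mixed term coming from $L_2^{(\beta)}$ is not immediate and one cannot simply sum over $k$ within the pointwise Gr\"onwall framework.

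Granting the coercivity, I would drop the non-negative diffusion term and apply Cauchy--Schwarz to the right-hand side, obtaining ${D}^{(\beta)}_{\tau}\|u_N^k\|^2\le 2\|F_p^k\|\,\|u_N^k\|$. I then invoke Theorem~\ref{thm:2.1} with $v^k=\|u_N^k\|$, $g^{k}=2\|F_p^k\|$, $\theta=0$, and $\lambda_l\equiv 0$. Since there is no reaction term, the hypothesis $\lambda\ge\sum_l\lambda_l$ holds with $\lambda=0$; by Remark~\ref{rem:2.1} the conclusion is then valid for every $\tau>0$, and the prefactor $2E_{\beta}(2\lambda t_k^\beta)$ collapses to $2$. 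This produces $\|u_N^k\|\le 2\bigl(\|u^0\|+\max_{1\le m\le k}\sum_{j=0}^m P_{m-j}\|F_p^j\|\bigr)$.

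The final step replaces $\|F_p^j\|$ by $\|L_p^{(\beta)}f^j\|$. Because $F_p^j=I_N\bigl(L_p^{(\beta)}f^j\bigr)$ and $f\in C(0,T;C(\bar I))$, the boundedness of the LGL interpolation operator gives $\|F_p^j\|\le C\|L_p^{(\beta)}f^j\|$ with $C$ independent of $\tau$ and $N$; absorbing this constant into $C$ yields \eqref{E:2.2.1}. Thus the only genuinely delicate point is the coercivity estimate, in particular handling the weighted average $L_2^{(\beta)}$ in the case $p=2$, whereas for $p=1$ the positivity of ${D}^{(\beta)}_{\tau}$ follows cleanly from the sign pattern of $\{{\varpi}_j\}$ recorded in Lemma~\ref{lem2.1}.
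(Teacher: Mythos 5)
Your argument for $p=1$ is correct and follows the same route as the paper: test with (a multiple of) $u_N^k$, establish the coercivity $\bigl({D}^{(\beta)}_{\tau}u_N^k,2u_N^k\bigr)\ge {D}^{(\beta)}_{\tau}\|u_N^k\|^2$, drop the elliptic term, and invoke Theorem~\ref{thm:2.1} with $\lambda_l\equiv 0$ so that the step-size restriction is vacuous. Your derivation of the coercivity is in fact slightly sharper than the paper's: writing $y^j=u_N^j-u^0$ and completing the square gives the exact identity
\begin{equation*}
2\bigl({D}^{(\beta)}_{\tau}u_N^k,u_N^k\bigr)-{D}^{(\beta)}_{\tau}\|u_N^k\|^2
={\tau}^{-\beta}\Bigl(b_k\|y^k\|^2+\sum_{j=0}^{k-1}(-{\varpi}_{k-j})\|y^k-y^j\|^2\Bigr),
\end{equation*}
whose non-negativity follows from $b_k>0$ and ${\varpi}_m<0$ for $m\ge1$ (Lemma~\ref{lem2.1}); the paper obtains the same inequality by applying Young's inequality term by term to $2{\varpi}_{k-j}(u_N^j,u_N^k)$ and $-2b_k(u^0,u_N^k)$. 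The final interpolation step $\|F_p^j\|=\|I_N(L_p^{(\beta)}f^j)\|\le C\|L_p^{(\beta)}f^j\|$ is exactly what the paper uses as well.

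The genuine gap is the case $p=2$, which you correctly identify as the obstacle but do not close: the theorem is stated for both $p=1$ and $p=2$, and testing with $2u_N^{k-\beta/2}=2L_2^{(\beta)}u_N^k$ produces the mixed term $\bigl({D}^{(\beta)}_{\tau}u_N^k,2u_N^{k-\beta/2}\bigr)$ whose sign cannot be settled by the completing-the-square identity above. The paper's resolution is to first resum the operator into difference form, ${D}^{(\beta)}_{\tau}u_N^k=\tau^{-\beta}\sum_{j=1}^{k}b_{k-j}\bigl(u_N^j-u_N^{j-1}\bigr)$ with $b_{k-1}=\sum_{j=0}^{k-1}{\varpi}_j$ (this is \eqref{E:2.2.6}, an Abel summation using $y^0=0$), and then to invoke the positivity result of Lemma~4.1 in \cite{liao2018discrete}: for a positive, decreasing kernel $b_0>b_1>\cdots>0$ (guaranteed by Lemma~\ref{lem2.1}) and the specific weight $\theta=\beta/2$ in the average $u_N^{k-\beta/2}=(1-\beta/2)u_N^k+(\beta/2)u_N^{k-1}$, one has $\bigl(\sum_{j=1}^{k}b_{k-j}(u_N^j-u_N^{j-1}),2u_N^{k-\beta/2}\bigr)\ge\sum_{j=1}^{k}b_{k-j}\bigl(\|u_N^j\|^2-\|u_N^{j-1}\|^2\bigr)={\tau}^{\beta}{D}^{(\beta)}_{\tau}\|u_N^k\|^2$. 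The compatibility between the weight $\beta/2$ and the monotonicity of $\{b_j\}$ is the nontrivial ingredient you would need to supply (or cite) to complete the proof; everything downstream of that coercivity is identical to your $p=1$ argument.
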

\begin{proof}
\begin{itemize}
  \item [{(i)}]  For $p=1$, letting $v=2u_N^k$ in \eqref{E:2.6}, one has
\begin{eqnarray}
\label{E:2.2.2}
\left({D}^{(\beta)}_{\tau} u_N^k,2u_N^k\right)+ 2\mu \left(\partial_x u_N^{k},\partial_x u_N^k\right)=\left(I_N f^{k},2u_N^k\right), \qquad   1 \leq k \leq n_T.
\end{eqnarray}
By \eqref{E:2.2}, Lemma \ref{lem2.1}, and the Young's inequality, we have
\begin{eqnarray}
\label{E:2.2.3}
\left({D}^{(\beta)}_{\tau} u_N^k,2u_N^k\right)
&  = &\frac{2}{{\tau}^{\beta}}\left[\sum_{j=0}^k {\varpi}_{k-j}\left(u_N^j,u_N^k\right)-\sum_{j=0}^k {\varpi}_{k-j}\left(u^0,u_N^k\right)\right] \nonumber \\
&  = &\frac{1}{{\tau}^{\beta}}\left[2{\varpi}_{0}\left(u_N^k,u_N^k\right)+2\sum_{j=1}^k {\varpi}_{k-j}\left(u_N^j,u_N^k\right)-2b_k\left(u^0,u_N^k\right)\right] \nonumber \\
&\geq&\frac{1}{{\tau}^{\beta}}\left[2{\varpi}_{0}\|u_N^k\|^2+\sum_{j=1}^k {\varpi}_{k-j}\|u_N^j\|^2+\sum_{j=1}^k {\varpi}_{k-j}\|u_N^k\|^2-b_k\|u_N^k\|^2-b_k\|u^0\|^2\right] \nonumber \\
&  = &\frac{1}{{\tau}^{\beta}}\left[\sum_{j=0}^k {\varpi}_{k-j}\|u_N^j\|^2-b_n\|u^0\|^2\right]={D}^{(\beta)}_{\tau} \left(\|u_N^k\|^2\right).
\end{eqnarray}

Using \eqref{E:2.2.3}, the positive-definiteness of $\left(\partial_x u_N^{k},\partial_x u_N^k\right)$,
and $\|I_N f^k\| \leq C\|f^k\|$, one has
\begin{eqnarray}
\label{E:2.2.4}
{D}^{(\beta)}_{\tau} \left(\|u_N^k\|^2\right)\leq C\|u_N^k\|~\|f^k\|, \qquad {\rm for} \quad 1 \leq k \leq n_T.
\end{eqnarray}

Finally, applying the discrete Gr{\"o}nwall inequality (see Theorem \ref{thm:2.1}) and introducing the following notations
\begin{eqnarray*}
v^k:=\|u_N^k\|,~~v^0:=u^0,~~g^{k-\theta}:=C\|f^{k}\|~({\rm with~\theta=0}),~~\lambda_j:=0~ {\rm for}~~0 \leq j \leq n_T-1,
\end{eqnarray*}
we immediately get the stability result \eqref{E:2.2.1} for $p=1$.

  \item [{(ii)}]   For $p=2$, letting $v=2u_N^{k-\beta/2}$ in \eqref{E:2.6},  one has
\begin{eqnarray}
\label{E:2.2.5}
\left({D}^{(\beta)}_{\tau} u_N^k,2u_N^{k-\beta/2}\right)+ 2\mu \left(\partial_x u_N^{k-\beta/2},\partial_x u_N^{k-\beta/2}\right)=\left(I_N f^{k-\beta/2},2u_N^{k-\beta/2}\right).
\end{eqnarray}

Rearranging the coefficients in   \eqref{E:2.2}, we have
\begin{eqnarray}
\label{E:2.2.6}
\left({D}^{(\beta)}_{\tau} u_N^k,2u_N^{k-\beta/2}\right)= \frac{1}{{\tau}^{\beta}}\sum_{j=1}^k {b}_{k-j}\left(u_N^j-u_N^{j-1},2u_N^{k-\beta/2}\right).
\end{eqnarray}
Similar to the proof of Lemma 4.1 in \cite{liao2018discrete} and by Lemma \ref{lem2.1}, we   get
\begin{eqnarray}
\label{E:2.2.7}
\left({D}^{(\beta)}_{\tau} u_N^k,2u_N^{k-\beta/2}\right) \geq {D}^{(\beta)}_{\tau} \left(\|u_N^k\|^2\right).
\end{eqnarray}
The remaining of the proof is similar to that shown in (i), which is omitted here.
The proof is completed.
\end{itemize}
\end{proof}

To obtain the convergence results, we introduce the following two lemmas.
\begin{lem}[{\cite[Theorem 6.2, p. 262]{bernardi1997spectral}}]\label{lem2.4}
Let $s$ and $r$ be arbitrary real numbers satisfying $0 \leq s \leq r$. There exist a projector $\Pi_N^{1,0}$ and a positive constant $C$ depending only on $r$ such that, for any function $u \in H_0^{s}(I) \cap H^r(I)$, the following estimate holds:
$$
\|u-\Pi_N^{1,0}u\|_{H^{s}(I)} \leq C N^{s-r} \|u\|_{H^r(I)},
$$
where the orthogonal projection operator $\Pi_N^{1,0}: H_0^{1}(I)\rightarrow V_N^0$ is defined as
$$
\left(\partial_x \left(\Pi_N^{1,0}u-u\right),\partial_x v\right)=0, \qquad \forall v \in V_N^0.
$$
\end{lem}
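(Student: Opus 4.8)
The plan is to establish the estimate at the two endpoint regularities $s=1$ and $s=0$, and then recover every intermediate exponent $0<s<1$ by interpolation between function spaces; the same scheme with higher-order boundary-preserving approximants handles $s>1$.

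First I would use the defining property of the projector. Since $\Pi_N^{1,0}u$ is the $H_0^1$-orthogonal projection of $u$, Galerkin orthogonality makes it the best approximation in the seminorm $|w|_{H^1(I)}=\|\partial_x w\|_{L^2(I)}$, so
$$|u-\Pi_N^{1,0}u|_{H^1(I)}=\inf_{v\in V_N^0}|u-v|_{H^1(I)}.$$
By the Poincar\'e inequality this seminorm is equivalent to the full $H^1$ norm on $H_0^1(I)$, so it suffices to exhibit one good competitor $v_N\in V_N^0$ and bound $|u-v_N|_{H^1(I)}$.

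The heart of the matter is this explicit construction, which I expect to be the main obstacle. Because $u\in H_0^1(I)$, we may write $u(x)=\int_{-1}^x(\partial_t u)(t)\,dt$, and $\int_{-1}^1\partial_t u\,dt=u(1)-u(-1)=0$. I would approximate $\partial_x u\in H^{r-1}(I)$ by its $L^2$-orthogonal projection $\pi_{N-1}(\partial_x u)\in\mathbb{P}_{N-1}(I)$ and set $v_N(x)=\int_{-1}^x\pi_{N-1}(\partial_x u)(t)\,dt\in\mathbb{P}_N(I)$. Since $1\in\mathbb{P}_{N-1}(I)$, the projection preserves the vanishing mean, $\int_{-1}^1\pi_{N-1}(\partial_x u)\,dt=\int_{-1}^1\partial_x u\,dt=0$, so $v_N(-1)=v_N(1)=0$ and $v_N\in V_N^0$. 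The standard one-dimensional Legendre projection estimate $\|w-\pi_{N-1}w\|_{L^2(I)}\le CN^{-(r-1)}\|w\|_{H^{r-1}(I)}$ applied to $w=\partial_x u$ then gives $|u-v_N|_{H^1(I)}=\|\partial_x u-\pi_{N-1}(\partial_x u)\|_{L^2(I)}\le CN^{1-r}\|u\|_{H^r(I)}$, which is exactly the claim at $s=1$. Proving this scalar truncation estimate for Jacobi/Legendre series is where the genuine spectral-approximation work sits.

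For the $L^2$ bound ($s=0$) I would run an Aubin--Nitsche duality argument. Writing $e=u-\Pi_N^{1,0}u$, for $\varphi\in L^2(I)$ let $w\in H_0^1(I)$ solve $-\partial_x^2 w=\varphi$, so that $\|w\|_{H^2(I)}\le C\|\varphi\|_{L^2(I)}$. Integrating by parts and using the orthogonality $(\partial_x e,\partial_x\Pi_N^{1,0}w)=0$ yields $(e,\varphi)=(\partial_x e,\partial_x(w-\Pi_N^{1,0}w))\le|e|_{H^1(I)}\,|w-\Pi_N^{1,0}w|_{H^1(I)}$; applying the $s=1$ estimate to both $u$ and $w$ (the latter with $r=2$) bounds this by $CN^{-r}\|u\|_{H^r(I)}\|\varphi\|_{L^2(I)}$, and taking the supremum over $\varphi$ gives the $N^{-r}$ rate. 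The intermediate exponents then follow by interpolating $I-\Pi_N^{1,0}$ between $L^2(I)$ and $H_0^1(I)$, and in each step the constant depends only on $r$, never on $u$ or $N$. Since this is precisely the content of Theorem~6.2 in Bernardi and Maday, in the paper it is invoked as a citation rather than reproved.
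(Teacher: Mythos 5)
The paper does not prove this lemma at all: it is quoted verbatim from Bernardi and Maday (Theorem~6.2, p.~262 of the cited handbook chapter) and used as a black box, so there is no internal argument to compare against. Your sketch is, however, a faithful reconstruction of the standard proof of the cited result, and its three ingredients are all sound: (i) Galerkin orthogonality makes $\Pi_N^{1,0}u$ the best approximation in the $H^1$ seminorm, and the competitor $v_N(x)=\int_{-1}^x\pi_{N-1}(\partial_t u)\,dt$ is correctly shown to lie in $V_N^0$ because $\pi_{N-1}$ preserves the zero mean of $\partial_x u$ (constants belong to $\mathbb{P}_{N-1}$), reducing $s=1$ to the one-dimensional Legendre truncation estimate; (ii) the Aubin--Nitsche duality argument for $s=0$ is correct, the boundary term in the integration by parts vanishing because $e\in H_0^1(I)$ and the dual regularity $\|w\|_{H^2}\le C\|\varphi\|_{L^2}$ holding on an interval; (iii) intermediate $s$ by interpolation is the standard route. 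Two points are glossed over: the case $s>1$ (permitted by the statement $0\le s\le r$) does not follow from interpolating between $L^2$ and $H_0^1$ and normally requires an inverse inequality combined with a simultaneous-approximation estimate; and at $s=1/2$ the interpolation space $[L^2(I),H_0^1(I)]_{1/2}$ is $H_{00}^{1/2}(I)$ rather than $H^{1/2}_0(I)$, a known subtlety one should at least acknowledge. Neither gap affects the paper, which only invokes the lemma for $s=0$ and $s=1$.
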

\begin{lem}[{\cite[Theorem 13.4, p. 303]{bernardi1997spectral}}]\label{lem2.5}
Let $r$ be arbitrary real numbers satisfying $r > 1/2$ and $I_N$ be the usual LGL interpolation operator. There exist a positive constant $C$ depending only on $r$ such that, for any function $u \in H^r(I)$, the following estimate holds:
$$
\|u-I_Nu\| \leq C N^{-r} \|u\|_{H^r(I)}.
$$
\end{lem}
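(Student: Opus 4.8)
The plan is to recognize this as the classical Legendre interpolation estimate and prove it by comparing the LGL interpolant $I_N u$ with the truncated Legendre series, i.e. the $L^2(I)$-orthogonal projection onto $\mathbb{P}_N(I)$, which I denote $P_N u$. First I would use that $r>1/2$ forces the Sobolev embedding $H^r(I)\hookrightarrow C(\bar I)$, so the nodal values defining $I_N u$ are meaningful and $I_N$ is well posed on $H^r(I)$. Since $u-P_N u$ is $L^2$-orthogonal to $\mathbb{P}_N(I)$ while $P_N u-I_N u\in\mathbb{P}_N(I)$, the Pythagorean identity gives the clean splitting
$$\|u-I_N u\|^2=\|u-P_N u\|^2+\|P_N u-I_N u\|^2,$$
reducing the task to bounding each piece by $CN^{-r}\|u\|_{H^r(I)}$.

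For the projection term I would expand $u=\sum_{k\ge0}\hat u_k L_k$ in Legendre polynomials, so that $u-P_N u=\sum_{k>N}\hat u_k L_k$ and $\|u-P_N u\|^2=\sum_{k>N}\hat u_k^2\|L_k\|^2$. Invoking the characterization of $H^r(I)$ through weighted sums of squared Legendre coefficients (equivalently, carrying the $H^r$ seminorm onto the coefficients with the factors $k^{2r}$), the tail is bounded by $N^{-2r}$ times $\|u\|_{H^r(I)}^2$. This is the standard spectral decay estimate and is routine once the coefficient characterization is in place.

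The aliasing term $\|P_N u-I_N u\|$ is the \emph{main obstacle}. Here I would introduce the discrete LGL inner product $(\phi,\psi)_N=\sum_j\omega_j\phi(x_j)\psi(x_j)$, which is exact for products of degree at most $2N-1$ and induces a discrete norm equivalent to $\|\cdot\|$ on $\mathbb{P}_N(I)$. Writing the discrete Legendre coefficients $\tilde u_k=(u,L_k)_N/(L_k,L_k)_N$ of $I_N u$ and comparing them with the exact coefficients $\hat u_k$ yields the aliasing formula that expresses $\hat u_k-\tilde u_k$ as a sum over the high modes $k'>N$ collapsing onto mode $k$ under evaluation at the $N+1$ nodes. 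Estimating $\|P_N u-I_N u\|^2=\sum_{k\le N}(\hat u_k-\tilde u_k)^2\|L_k\|^2$ then reduces to controlling this high-mode tail by Cauchy--Schwarz, where the summability of $\sum_{k'>N}\hat u_{k'}^2 k'^{2s}$ over the relevant range of $s$, and hence precisely the requirement $r>1/2$, is what makes the aliasing contribution also $O(N^{-r}\|u\|_{H^r(I)})$. Combining the two bounds through the Pythagorean identity yields the claim, with $C$ depending only on $r$ via the coefficient-decay estimates.
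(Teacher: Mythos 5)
The paper offers no proof of this lemma: it is imported verbatim from Bernardi and Maday (Theorem 13.4, p.~303 of the cited handbook chapter), so there is no in-paper argument to compare yours against. Your sketch is, in outline, exactly the classical proof found in that reference and in Canuto--Quarteroni: split $u-I_Nu$ into the truncation error $u-P_Nu$ and the aliasing error $P_Nu-I_Nu$, use orthogonality to get the Pythagorean identity, bound the tail of the Legendre expansion, and control the collapse of high modes onto $\mathbb{P}_N(I)$ via the exactness of the LGL quadrature on $\mathbb{P}_{2N-1}(I)$. The architecture is correct.

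Two steps, as written, are not yet proofs. First, $H^r(I)$ is \emph{not} characterized by finiteness of $\sum_k k^{2r}\hat u_k^2\|L_k\|^2$; that sum defines the domain of a power of the Legendre Sturm--Liouville operator $Au=-\bigl((1-x^2)u'\bigr)'$, which is a different (degenerate-weighted) scale of spaces, strictly larger than $H^r(I)$. What is true, and all you actually need, is the one-sided inequality $\sum_{k>N}\hat u_k^2\|L_k\|^2\le CN^{-2r}\|u\|_{H^r(I)}^2$, proved for integer $r$ by repeated integration by parts against $\bigl((1-x^2)L_k'\bigr)'=-k(k+1)L_k$ and by interpolation otherwise; if you lean on a two-sided ``coefficient characterization'' of $H^r$, that step is false as stated. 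Second, the aliasing bound --- which you correctly identify as the main obstacle and as the place where $r>1/2$ enters --- is asserted rather than carried out: one needs quantitative control of $(L_j,L_k)_N$ for $j>N\ge k$ at the Lobatto nodes, including the endpoint weights and the fact that $\|L_N\|_N^2\neq\|L_N\|^2$, before Cauchy--Schwarz on the tail closes the estimate. Since that computation is the entire content of the lemma, your proposal is a correct plan rather than a proof; given that the paper itself treats the result as a citation, the honest options are to cite it likewise or to supply the aliasing computation in full.
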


Next, we consider the convergence analysis for the scheme \eqref{E:2.6}.
We also assume that the solution $u(t)$ satisfies (see, e.g., \cite{Diethelm-B10,zeng2017second})
\begin{equation}\label{solu:u}
u(t)=u_0+c_0t^{\sigma}+\sum_{j=1}^{\infty}c_kt^{\sigma_j},
{\quad}\beta<\sigma<\sigma_j<\sigma_{j+1}.
\end{equation}
The singularity index $\sigma$ determines the accuracy
of the numerical solution if $t^{\sigma}$ is not treated
properly. We do not investigate how to deal with the
singularity of the solution in this work, the interesting readers can
refer to \cite{Lubich1986Discretized,stynes2017error,zeng2017second}.
For the time discretization in \eqref{E:2.6},
the global convergence rate is $\min\{\sigma-\beta,p\}$,
that is $q=\min\{\sigma-\beta,p\}$.

Denote $u_*^k=\Pi_N^{1,0}u^k, e^k=u_*^k-u_N^k$ and $\eta^k=u^k-u_*^k$. Noticing that $\left(\partial_x \eta^k,\partial_x v\right)=0$ from Lemma \ref{lem2.4}, we get the error equation for \eqref{E:2.6}
as follows
\begin{eqnarray}
\label{E:2.2.8}
\left({D}^{(\beta)}_{\tau} e^k,v\right)+\mu \left(\partial_x e^k,\partial_x v\right)=\left(G^k,v\right),
\end{eqnarray}
where $G^k=\sum_{i=1}^3 G_i^k$ and
\begin{eqnarray}
\label{E:2.2.9}
G_1^k=f^k-F_p^k,\quad G_2^k=R^k
=O(\tau^{\sigma-\beta}k^{\sigma-\beta-p}),\quad G_3^k=-{D}^{(\beta)}_{\tau} \eta^k.
\end{eqnarray}

By Theorem \ref{thm:2.2}, Lemmas \ref{lem2.4} and   \ref{lem2.5}, we obtain the convergence result for the scheme \eqref{E:2.6}.
\begin{thm}\label{thm:2.3} Suppose that $r \geq 1$, $u$ and $u_N^k~(k=1,2,\cdots,n_T)$ are solutions of \eqref{E:1.1} and \eqref{E:2.6}, respectively. If $m \geq r+1, u\in C(0,T; H^m(I)\cap H^1_0(I)), f \in C(0,T;H^m(I))$ and $u_0 \in H^m(I)$. Then for any $\tau > 0$, it holds that
\begin{eqnarray}
\label{2.2.10}
\|u^k-u_N^k\| \leq C(\tau^q+N^{-r}),
\end{eqnarray}
where $C$ is a positive constant, independent of $\tau, N$, $q=\min\{\sigma-\beta,p\}$.
\end{thm}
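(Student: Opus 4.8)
The plan is to reuse the temporal--spatial splitting already prepared before the statement and to convert the error equation into a form on which the discrete fractional Gr\"onwall inequality can act. First I would write $u^k-u_N^k=\eta^k+e^k$ with $\eta^k=u^k-\Pi_N^{1,0}u^k$ and $e^k=\Pi_N^{1,0}u^k-u_N^k\in V_N^0$, so that by the triangle inequality it is enough to bound $\|\eta^k\|$ and $\|e^k\|$ separately. The projection part is immediate: Lemma \ref{lem2.4} with $s=0$ together with the regularity $u\in C(0,T;H^m(I)\cap H_0^1(I))$ and $m\ge r+1>r$ gives $\|\eta^k\|\le CN^{-m}\|u^k\|_{H^m}\le CN^{-r}$, uniformly in $k$.

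For the discrete part I would exploit that the error equation \eqref{E:2.2.8} has the same bilinear structure as the scheme \eqref{E:2.6}. Running the energy argument of Theorem \ref{thm:2.2} for the relevant $p$ (testing with $2e^k$, resp.\ $2e^{k-\beta/2}$, and using \eqref{E:2.2.3}/\eqref{E:2.2.7} to replace $(D^{(\beta)}_\tau e^k,2e^k)$ by $D^{(\beta)}_\tau\|e^k\|^2$) reduces everything to $D^{(\beta)}_\tau\|e^k\|^2\le 2\|e^k\|\,\|G^k\|$. Since the linear problem carries no reaction term, the Gr\"onwall sequence is $\lambda_l\equiv0$, so by Remark \ref{rem:2.1} Theorem \ref{thm:2.1} applies for every $\tau>0$ with $E_\beta(2\lambda t_k^\beta)=1$, giving $\|e^k\|\le C\big(\|e^0\|+\max_{1\le m\le k}\sum_{j=0}^m P_{m-j}\|G^j\|\big)$ with $C$ independent of $\tau$ and $N$; this is exactly what produces the ``any $\tau>0$'' and the $\tau,N$-independence asserted in the theorem.

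It then remains to estimate the four contributions on the right. The initial error $e^0=\Pi_N^{1,0}u_0-I_Nu_0$ is controlled by $CN^{-r}$ after inserting $u_0$ and applying Lemmas \ref{lem2.4} and \ref{lem2.5}. For $G_1^k=f^k-F_p^k$ the interpolation estimate of Lemma \ref{lem2.5} yields $\|G_1^k\|\le CN^{-r}$, whence $\sum_{j=0}^m P_{m-j}\|G_1^j\|\le CN^{-r}\tau^\beta\sum_{l=0}^m\varrho_l\le CN^{-r}$, because $\tau^\beta\sum_{l=0}^m\varrho_l\le t_{m+1}^\beta/\beta\le(T+\tau)^\beta/\beta$ by Lemma \ref{lem2.2}. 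For the temporal consistency error $G_2^k=R^k=O(\tau^{\sigma-\beta}k^{\sigma-\beta-p})$ I would carry out the weighted summation $\sum_{j=0}^m P_{m-j}\|R^j\|$ against $P_{m-j}=\tau^\beta\varrho_{m-j}$; using the decay $\varrho_{m-j}\le(m-j)^{\beta-1}$ and the singular factor $j^{\sigma-\beta-p}$, this convolution is summable and delivers the sharp temporal rate $C\tau^{q}$ with $q=\min\{\sigma-\beta,p\}$.

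The main obstacle is the last term $G_3^k=-D^{(\beta)}_\tau\eta^k$, where a naive pointwise bound $\|D^{(\beta)}_\tau\eta^k\|\lesssim\tau^{-\beta}\max_i\|\eta^i\|\lesssim\tau^{-\beta}N^{-m}$ destroys the cancellation of the convolution quadrature: inserted into $\sum_j P_{m-j}\|G_3^j\|$ it leaves a factor $\sum_{l=0}^m\varrho_l\sim(m+1)^\beta$ that blows up as $\tau\to0$. The clean remedy is the exact duality between $P$ and $D^{(\beta)}_\tau$ already recorded in \eqref{E:2.1.8}, which for the vector sequence $\eta^j$ reads $\sum_{j=0}^m P_{m-j}D^{(\beta)}_\tau\eta^j=\eta^m-\eta^0$; thus the signed convolution of $G_3$ telescopes to $\eta^0-\eta^m$, bounded by $CN^{-r}$ uniformly in $\tau$ by Lemma \ref{lem2.4}. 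I would therefore keep $G_3$ out of the scalar Gr\"onwall source and, using linearity of \eqref{E:2.2.8}, split $e^k$ into a part driven by $G_1+G_2$ and a part driven by $G_3$, estimating the latter through this telescoping identity. (An alternative is to read $D^{(\beta)}_\tau\eta^k\approx{_{0}^{C}{\mathcal{D}}}^{\beta}_t\eta(t_k)=(I-\Pi_N^{1,0})(\mu\partial_x^2u+f)(t_k)$ from the PDE and bound it uniformly, but this demands $u$ smoother than $H^m$, which is precisely why the telescoping route is preferable for the stated hypothesis $m\ge r+1$.) Combining these estimates gives $\|e^k\|\le C(\tau^q+N^{-r})$, which with the bound on $\|\eta^k\|$ yields \eqref{2.2.10}.
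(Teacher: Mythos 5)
Your overall strategy coincides with the paper's: the same splitting $u^k-u_N^k=\eta^k+e^k$ with $\eta^k=u^k-\Pi_N^{1,0}u^k$, the same error equation \eqref{E:2.2.8}, the same appeal to the stability result (Theorem \ref{thm:2.2}, i.e.\ Theorem \ref{thm:2.1} with $\lambda_l\equiv0$, valid for every $\tau>0$ by Remark \ref{rem:2.1}), and the same bounds for $\|e^0\|$, $G_1^k$ and $G_2^k$. The paper simply uses $\max_k\|G_i^k\|$ together with $\sum_{j=0}^{m}P_{m-j}\le T^{\beta}/\beta$ (fifth line of Lemma \ref{lem2.2}) instead of your finer weighted convolution sums; both are adequate here.

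The one place you depart from the paper is $G_3^k=-{D}^{(\beta)}_{\tau}\eta^k$, and that is where your argument has a gap. First, the ``naive pointwise bound'' you reject is a strawman: since $\Pi_N^{1,0}$ is linear and independent of the time level, it commutes with the weighted sum defining ${D}^{(\beta)}_{\tau}$, so ${D}^{(\beta)}_{\tau}\eta^k=(I-\Pi_N^{1,0})\bigl({D}^{(\beta)}_{\tau}u^k\bigr)$, and Lemma \ref{lem2.4} gives $\|G_3^k\|\le CN^{-r}\|{D}^{(\beta)}_{\tau}u^k\|_{H^r}$ with no $\tau^{-\beta}$ loss; this is precisely the paper's estimate $\|G_3^k\|\le CN^{-r}$. (It does tacitly require $\sup_k\|{D}^{(\beta)}_{\tau}u^k\|_{H^r}<\infty$, which the paper leaves implicit, so your regularity concern is legitimate at that level; but the remedy is to assume or verify this bound, not to abandon the pointwise estimate.) Second, your proposed replacement does not close as described: the identity $\sum_{j=0}^{m}P_{m-j}{D}^{(\beta)}_{\tau}\eta^j=\eta^m-\eta^0$ is correct, but it never appears in the energy argument. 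Testing the $G_3$-driven sub-problem with twice its solution $w^k$ and applying \eqref{E:2.1.8} yields $\|w^k\|^2\le 2\sum_{j=0}^{k}P_{k-j}\left(G_3^j,w^j\right)$, in which $G_3^j$ is paired with the varying function $w^j$; the signed convolution $\sum_{j}P_{k-j}G_3^j$ cannot be extracted from this expression, so the telescoping cancellation is unavailable without an additional summation-by-parts step that you do not supply, and splitting $e^k$ by linearity does not change this. Either restore the pointwise bound via the commuting-projection identity above, or carry out the Abel summation explicitly; as written, the $G_3$ contribution is not controlled.
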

\begin{proof}
We consider $p=1$, the case for $p=2$ is similarly proved.
By Theorem \ref{thm:2.2} and the fifth line of Lemma \ref{lem2.2}, we need only to evaluate
$$\|e^0\|+2C\max_{1\leq k \leq n_T} \left\{\|G_1^k\|+\|G_2^k\|+\|G_3^k\|\right\}$$
to get an error bound. By \eqref{E:2.2.9}, Lemmas \ref{lem2.2},  \ref{lem2.4}, and  \ref{lem2.5}, we get the error bounds as follows
\begin{eqnarray*}
\|G_1^k\| &\leq& CN^{-r},\qquad \|G_2^k\| \leq C\tau^q,\qquad
\|G_3^k\|=\frac{1}{{\tau}^{\beta}}\left\|\sum_{j=0}^k {\varpi}_{k-j}\left(\eta^j-\eta^0\right)\right\|
\leq CN^{-r},\\
\|e^0\| &=& \|u_*^0-I_N u_0\| \leq \|u_0-I_Nu_0\|+\|u_0-u_*^0\| \leq CN^{-r}.
\end{eqnarray*}
The above bounds yield
$$\|e^k\| \leq C(\tau^q+N^{-r}).$$
By using Lemma \ref{lem2.4} again, one has
$$\|u^k-u_N^k\| = \|u^k-u_*^k+u_*^k-u_N^k\| \leq \|\eta^k\|+\|e^k\|\leq C(\tau^q+N^{-r}).$$
The proof is completed.
\end{proof}

\section{Numerical analysis for the nonlinear equation}\setcounter{equation}{0}
In this section, we develop the semi-implicit time-stepping Legendre Galerkin spectral method for the nonlinear problem \eqref{E:1.1}, i.e., $f(x,t,u)=f(u)$. We then combine the discrete fractional Gr{\"o}nwall inequality and the temporal-spatial error splitting argument  (see, e.g., \cite{li2012mathematical,li2017unconditionally})
to prove the stability and convergence of the numerical scheme.
We assume that the solution of problem \eqref{E:1.1} satisfies the following condition
\begin{eqnarray}
\label{E:3.1}
\|u_0\|_{H^{r+1}}+\|u\|_{L^{\infty}(0,T;H^{r+1})} \leq K,
\end{eqnarray}
where $K$ is a positive constant independent of $N$ and $\tau$.

Throughout this section,  $C$ denotes a generic positive constant, which may vary at different occurrences, but  is  independent of $N$ and time step size $\tau$. Denote $C_i~(i=1,2,\cdots)$ as positive constants independent of $N$ and   $\tau$.

The following inverse inequalities
(see, e.g., \cite{Adams03,li2003legendre,brenner2007mathematical})
will be  used in the  numerical analysis
\begin{eqnarray}
&&\|v\|_{L^{\infty}} \leq \frac{N+1}{\sqrt{2}} \|v\|,  \qquad \forall~ v \in V_N^0,\label{E:3.2}\\
&&\|v\|_{L^{\infty}} \leq C_I \|v\|_{H^2},  \qquad \forall~ v \in H^1_0(I)\cap H^2(I),\label{E:3.3}
\end{eqnarray}
where $C_I$ is a positive constant  depending only on the interval $I$.

We extend the method \eqref{E:2.6} with $p=1$ to the nonlinear equation \eqref{E:1.1},
while the nonlinear term  $f(u_N^k)$ is approximated  by $f(u_N^{k-1})$.
We obtain the following semi-implicit  Galerkin spectral method:
find $u_N^k\in V_N^0$ such that
\begin{equation}\label{E:3.7}
\left\{\begin{array}{ll}
\left({D}^{(\beta)}_{\tau} u_N^k,v\right)+ \mu \left(\partial_x u_N^{k},\partial_x v\right)= \left(f(u_N^{k-1}),v\right),\quad \forall v \in V_N^0, \quad k=1,2,\cdots,n_T,\\ [0.1cm]
u^0=I_N u_0,
\end{array}\right.
\end{equation}
where $I_N$ is the LGL interpolation operator.



We also assume that the nonlinear term $f(u)$ satisfies the local Lipschitz condition
$$
\|f(u^{k})-f(u^{k-1})\| \leq \|f'(\xi)\|~\|u^{k}-u^{k-1}\|\leq C \|u^{k}-u^{k-1}\|,
{\quad} |\xi|\leq K_1,
$$
where $K_1$ is a positive constant that is suitably large.

%

\subsection{An error estimate of the time discrete system}
In order to obtain the unconditionally stability of \eqref{E:3.7}, we now introduce a time-discrete system
\begin{equation}
\label{E:3.1.1}
\left\{\begin{array}{lll}
{D}^{(\beta)}_{\tau} U^k=\mu \partial^2_x U^{k}+f(U^{k-1}),\qquad k=1,2,\cdots,n_T,\\
U^k(x)=0,\qquad {\rm for} \quad x \in \partial I, \quad k=1,2,\cdots,n_T,\\
U^0(x)=u_0(x),\qquad {\rm for} \quad x \in I.
\end{array}\right.
\end{equation}

Let $R_1^k$ be the time discretization  error of \eqref{E:3.7}. Then we can obtain
\begin{eqnarray}\label{E:3.4}
{D}^{(\beta)}_{\tau} u^k=\mu \partial^2_x u^{k}+f(u^{k-1})+R_1^k,\qquad k=1,2,\cdots,n_T,
\end{eqnarray}
where
\begin{eqnarray}
\label{E:3.5}
R_1^k = {D}^{(\beta)}_{\tau} u^k-{_{0}^{C}{\mathcal {D}}}^{\beta}_t u^k+f(u^{k})-f(u^{k-1})
=O(\tau^{\tilde{q}}),
\end{eqnarray}
in which $\tilde{q}=\min\{\sigma-\beta,1\}$ when the first-order extrapolation is applied.

Letting ${\varepsilon}^k=u^k-U^k$ and subtracting \eqref{E:3.1.1} from \eqref{E:3.4} gives
\begin{eqnarray}
\label{E:3.1.4}
{D}^{(\beta)}_{\tau} {\varepsilon}^k=\mu \partial^2_x {\varepsilon}^{k}+f(u^{k-1})-f(U^{k-1})+R_1^k,\qquad k=1,2,\cdots,n_T.
\end{eqnarray}

Define
$$K_1=\max_{1\leq k \leq n_T} \|u^k\|_{L^{\infty}}+\max_{1\leq k \leq n_T} \|u^k\|_{H^2}+\max_{1\leq k \leq n_T} \|{D}^{(\beta)}_{\tau}u^k\|_{H^2}+1.$$
We now present an error bound of ${\varepsilon}^k=u^k-U^k$   as follows.

\begin{thm}\label{thm:3.1} Suppose that $r \geq 1$, $u$ and $U^k~(k=1,2,\cdots,n_T)$
are solutions of \eqref{E:1.1} and \eqref{E:3.1.1}, respectively.
If $u\in C(0,T; H^2(I)\cap H^1_0(I))$, $|f'(z)|$ is bounded for $|z|\leq K_1$, $f\in H^1(I)$, and $u_0 \in H^m(I)$. Then there exists a suitable
constant $\tau_0^*>0$ such that when $\tau \leq \tau_0^*$, it holds
\begin{eqnarray}
\|{\varepsilon}^k\|_{H^2} &\leq& C_*\tau^{\tilde{q}},\label{E:3.1.5}\\
\|U^k\|_{L^{\infty}}+\|{D}^{(\beta)}_{\tau}U^k\|_{H^2} &\leq& 2K_1,\label{E:3.1.6}
\end{eqnarray}
where $C_*$ is a constant independent of $N$ and $\tau$,
and $\tilde{q}=\min\{\sigma-\beta,1\}$.
\end{thm}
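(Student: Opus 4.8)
The plan is to argue by induction on the time level $k$, using the discrete fractional Gr\"onwall inequality (Theorem \ref{thm:2.1}) to control the error in a weak norm and then bootstrapping to $H^2$, while the temporal--spatial splitting philosophy keeps the argument of $f$ inside the region where the \emph{local} Lipschitz bound is valid. The induction hypothesis at all levels $\le k-1$ will be precisely the two asserted bounds \eqref{E:3.1.5}--\eqref{E:3.1.6}. The base case $k=0$ is trivial, since $\varepsilon^0=u^0-U^0=0$ and $U^0=u_0$. For the inductive step I would first invoke the hypothesis $\|\varepsilon^{k-1}\|_{H^2}\le C_*\tau^{\tilde q}$ together with the inverse/Sobolev inequality \eqref{E:3.3} to get $\|U^{k-1}\|_{L^\infty}\le\|u^{k-1}\|_{L^\infty}+C_I\|\varepsilon^{k-1}\|_{H^2}\le (K_1-1)+C_IC_*\tau^{\tilde q}$. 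Choosing $\tau$ small enough that $C_IC_*\tau^{\tilde q}\le 1$ guarantees $\|U^{k-1}\|_{L^\infty}\le K_1$, hence $|U^{k-1}|\le K_1$ pointwise, so the local Lipschitz estimate $\|f(u^{k-1})-f(U^{k-1})\|\le C\|\varepsilon^{k-1}\|$ holds with a constant $C$ depending only on $K_1$, and therefore independent of $k,\tau,N$. This is exactly the mechanism that removes any step-size restriction tied to the Lipschitz constant.

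Next I would run the energy method on the error equation \eqref{E:3.1.4}. Testing with $2\varepsilon^k$ and using the positive-definiteness of the discrete convolution (as established in \eqref{E:2.2.3}) gives $D^{(\beta)}_\tau\|\varepsilon^k\|^2+2\mu\|\partial_x\varepsilon^k\|^2\le 2C\|\varepsilon^{k-1}\|\,\|\varepsilon^k\|+2\|R_1^k\|\,\|\varepsilon^k\|$. After Young's inequality this has exactly the form required by Theorem \ref{thm:2.1} with $v^k=\|\varepsilon^k\|$, a fixed $\lambda=\sum_l\lambda_l$ built from $C$, and forcing $g^k=2\|R_1^k\|=O(\tau^{\tilde q})$ coming from \eqref{E:3.5}. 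Since $v^0=0$ and, by the fifth line of Lemma \ref{lem2.2}, $\sum_{j=0}^m P_{m-j}\le \tau^\beta (m+1)^\beta/\beta\le (T+\tau)^\beta/\beta$, the Gr\"onwall conclusion of Theorem \ref{thm:2.1} yields $\|\varepsilon^k\|\le C\tau^{\tilde q}$; the threshold \eqref{E:2.1.5} on $\tau$ enters precisely here. Testing instead with $-2\partial_x^2\varepsilon^k$, and then inverting the discrete operator against the nonnegative kernel $P_{k-j}$ through the identity \eqref{E:2.1.8}, upgrades this to $\|\partial_x\varepsilon^k\|\le C\tau^{\tilde q}$.

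The main obstacle is the final upgrade to the $H^2$ bound \eqref{E:3.1.5} with the sharp order $\tau^{\tilde q}$. The natural route is the elliptic relation $\mu\partial_x^2\varepsilon^k=D^{(\beta)}_\tau\varepsilon^k-[f(u^{k-1})-f(U^{k-1})]-R_1^k$, which reduces the task to bounding $\|D^{(\beta)}_\tau\varepsilon^k\|$ by $O(\tau^{\tilde q})$, the two remaining terms being already controlled. However, a bare energy estimate (for instance testing with $2D^{(\beta)}_\tau\varepsilon^k$ and summing against $P_{k-j}$) controls only the weighted sum $\sum_m P_{k-m}\|D^{(\beta)}_\tau\varepsilon^m\|^2$, and isolating the top level through $P_0=\tau^\beta$ costs a factor $\tau^{-\beta/2}$; the same loss reappears if one tries to estimate $\|\partial_x^2\varepsilon^k\|$ directly. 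Securing the full order therefore hinges on a discrete smoothing (maximal-regularity) property of the convolution quadrature generated by $(1-z)^\beta$, uniform in the eigenvalues of $-\partial_x^2$, rather than on the energy argument alone; this is where the regularity hypotheses $u\in C(0,T;H^2(I)\cap H^1_0(I))$, $f\in H^1(I)$ with bounded $f'$, and the explicit structure of the kernel are genuinely needed, and I expect this step to be the delicate heart of the proof.

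Once $\|\varepsilon^k\|_{H^2}\le C_*\tau^{\tilde q}$ is obtained with $C_*$ independent of $k,\tau,N$, I would close the induction by fixing $\tau_0^*$ as the minimum of the Gr\"onwall threshold \eqref{E:2.1.5} and the value that makes $C_IC_*\tau^{\tilde q}\le 1$; this is what validates the local Lipschitz step at the current level and propagates the hypothesis. Finally, the boundedness assertion \eqref{E:3.1.6} follows from \eqref{E:3.1.5} by the triangle inequality applied to $U^k=u^k-\varepsilon^k$ and $D^{(\beta)}_\tau U^k=D^{(\beta)}_\tau u^k-D^{(\beta)}_\tau\varepsilon^k$, the inverse inequality \eqref{E:3.3}, and the definition of $K_1$, provided the accompanying control of $\|D^{(\beta)}_\tau\varepsilon^k\|_{H^2}$ from the preceding step is in hand.
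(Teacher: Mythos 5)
Your overall architecture (induction in $k$, Sobolev embedding to keep $U^{k-1}$ in the local Lipschitz region, $L^2$ energy estimate plus Theorem \ref{thm:2.1} to get $\|\varepsilon^k\|\le C\tau^{\tilde q}$, then bootstrapping to higher norms) matches the paper. But there is a genuine gap exactly where you flag ``the delicate heart'': you propose to obtain the $H^2$ bound from the elliptic relation $\mu\partial_x^2\varepsilon^k=D^{(\beta)}_\tau\varepsilon^k-[f(u^{k-1})-f(U^{k-1})]-R_1^k$, observe correctly that a pointwise-in-time bound on $\|D^{(\beta)}_\tau\varepsilon^k\|$ of order $\tau^{\tilde q}$ does not follow from the energy method without losing $\tau^{-\beta/2}$, and then defer to an unproved ``discrete maximal regularity'' property of the quadrature. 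No such property is established in the paper or needed, and as written your argument does not close.

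The paper's route avoids the obstruction entirely: it never tries to bound $\|D^{(\beta)}_\tau\varepsilon^n\|$ at the sharp order. Instead it tests \eqref{E:3.1.4} with $2D^{(\beta)}_\tau\varepsilon^n$ (for the $H^1$ bound) and with $-2D^{(\beta)}_\tau\partial_x^2\varepsilon^n$ (for the $H^2$ bound). In the latter case, after integration by parts the left-hand side produces $2\|\partial_xD^{(\beta)}_\tau\varepsilon^n\|^2+\mu D^{(\beta)}_\tau\bigl(\|\partial_x^2\varepsilon^n\|^2\bigr)$, the forcing terms pair with $D^{(\beta)}_\tau\partial_x\varepsilon^n$, and Young's inequality absorbs them into the first term on the left; what remains on the right is $C\bigl(\|\partial_x(f(u^{n-1})-f(U^{n-1}))\|^2+\|\partial_xR_1^n\|^2\bigr)\le C\tau^{2\tilde q}$, using \eqref{E:3.1.10} and the already-established lower-order bounds at level $n-1$. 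Corollary \ref{cor:2.1} applied to $v^n=\|\partial_x^2\varepsilon^n\|^2$ with $g^n=C\tau^{2\tilde q}$ and $\lambda_j=0$ then gives $\|\partial_x^2\varepsilon^n\|\le C_3\tau^{\tilde q}$ with no loss. Finally, for \eqref{E:3.1.6} the paper does not need sharp control of $\|D^{(\beta)}_\tau\varepsilon^n\|_{H^2}$ either: it uses the crude bound $\|D^{(\beta)}_\tau\varepsilon^n\|_{H^2}\le C\tau^{-\beta}\max_j\|\varepsilon^j\|_{H^2}\le C_*\tau^{\tilde q-\beta}$ directly from \eqref{E:2.2}, which is made $\le 1$ by shrinking $\tau$. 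You should replace your maximal-regularity step by this second energy estimate; with that substitution the rest of your argument is sound.
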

\begin{proof}
We  use the mathematical induction method to prove \eqref{E:3.1.5}. Obviously, \eqref{E:3.1.5} holds for $k=0$. Assume \eqref{E:3.1.5} holds for $k \leq n-1$. Then, by  \eqref{E:3.2}
and \eqref{E:3.1.5}, one has
\begin{eqnarray}
\label{E:3.1.7}
\|U^k\|_{\infty}
\leq \|u^k\|_{L^{\infty}}+C_I\|{\varepsilon}^k\|_{H^2}
\leq \|u^k\|_{L^{\infty}}+C_IC_*\tau^{\tilde{q}}
\leq K_1\qquad k \leq n-1,
\end{eqnarray}
provided $\tau \leq (C_IC_*)^{-1/\tilde{q}}$. Moreover, for $\tau\leq (C_*)^{-1/\tilde{q}}$, we have
\begin{eqnarray}
\label{E:3.1.8}
\|U^k\|_{H^2} \leq \|u^k\|_{H^2}+\|{\varepsilon}^k\|_{H^2} \leq \|u^k\|_{H^2}+C_*\tau^{\tilde{q}}
\leq K_1.\end{eqnarray}

The following estimates are easily obtained:
\begin{eqnarray}
\|f(u^{k-1})-f(U^{k-1})\| &\leq&  C\|{\varepsilon}^{k-1}\|, \label{E:3.1.9}\\
 \left\|\partial_x \left(f(u^{k-1})-f(U^{k-1})\right)\right\|
&\leq&C\|{\varepsilon}^{k-1}\|_{H^1}.\label{E:3.1.10}
\end{eqnarray}

Next, we prove that \eqref{E:3.1.5} holds for $k\leq n$ in \eqref{E:3.1.4}.
Multiplying both sides of \eqref{E:3.1.4} by $2{\varepsilon}^{n}$ and
integrating the result over $I$ yields
\begin{eqnarray}
\label{E:3.1.12}
{D}^{(\beta)}_{\tau} \left(\|{\varepsilon}^n\|^2\right)
& \leq &2\left(f(u^{k-1})-f(U^{k-1}),{\varepsilon}^{n}\right)
+2\left(R_1^n,{\varepsilon}^{n}\right) \nonumber \\
& \leq & C\|{\varepsilon}^{n}\|^2+C\|{\varepsilon}^{n-1}\|^2
+2\|{\varepsilon}^{n}\|~\|R_1^n\|,
\end{eqnarray}
where \eqref{E:3.1.9} is used.

Applying \eqref{E:3.5} and  Theorem \ref{thm:2.1} with
\begin{eqnarray*}
v^n:=\|{\varepsilon}^{n}\|,\quad v^0:=0,\quad g^{n-\theta}:=2\|R_1^n\|~({\rm with~\theta=0}), \\
\lambda_0=\lambda_1:=C,\quad \lambda_j:=0\quad {\rm for}\quad 2 \leq j \leq n_T-1,
\end{eqnarray*}
one has
\begin{eqnarray}
\label{E:3.1.13}
\|{\varepsilon}^{n}\| \leq C_1 \tau^{\tilde{q}},{\quad} \text{if} \quad
\tau\leq \frac{1}{\sqrt[\beta]{C(1+\beta)}}.
\end{eqnarray}

To derive an estimate of $\|\partial_x {\varepsilon}^{n}\|$, we multiply \eqref{E:3.1.4} by $2{D}^{(\beta)}_{\tau}{\varepsilon}^{n}$  and integrate the result over $I$ to obtain
\begin{eqnarray}
\label{E:3.1.14}
& &2\|{D}^{(\beta)}_{\tau} {\varepsilon}^n\|^2+\mu {D}^{(\beta)}_{\tau} \left(\|\partial_x{\varepsilon}^n\|^2\right)=2\left(f(u^{n-1})-f(U^{n-1}),{D}^{(\beta)}_{\tau}{\varepsilon}^{n}\right)
+2\left(R_1^n,{D}^{(\beta)}_{\tau}{\varepsilon}^{n}\right).
\end{eqnarray}


By Young inequality, \eqref{E:3.5}, \eqref{E:3.1.9}, and \eqref{E:3.1.13}, we can derive
\begin{eqnarray*}
\left|\left(f(u^{n-1})-f(U^{n-1}),{D}^{(\beta)}_{\tau}{\varepsilon}^{n}\right)\right|
&\leq& \frac{3}{2}\left\|f(u^{n-1})-f(U^{n-1})\right\|^2+\frac{2}{3}\|{D}^{(\beta)}_{\tau} {\varepsilon}^n\|^2
\leq \frac{2}{3}\|{D}^{(\beta)}_{\tau} {\varepsilon}^n\|^2 +C{\tau}^{2\tilde{q}},\\
\left|\left(R_1^n,{D}^{(\beta)}_{\tau}{\varepsilon}^{n}\right)\right|
&\leq& \frac{3}{2}\left\|R_1^n\right\|^2+\frac{2}{3}\|{D}^{(\beta)}_{\tau} {\varepsilon}^n\|^2
\leq \frac{2}{3}\|{D}^{(\beta)}_{\tau} {\varepsilon}^n\|^2 +C{\tau}^{2\tilde{q}}.
\end{eqnarray*}
Substituting the above estimates into \eqref{E:3.1.14}, we have

\begin{eqnarray}
\label{E:3.1.15}
{D}^{(\beta)}_{\tau} \left(\|\partial_x{\varepsilon}^n\|^2\right) \leq C {\tau}^{2\tilde{q}}.
\end{eqnarray}

Applying   Corollary \ref{cor:2.1}  with
\begin{eqnarray*}
v^n:=\|\partial_x {\varepsilon}^{n}\|^2,\quad v^0:=0,\quad g^{n}:=C {\tau}^{2\tilde{q}},
\quad \lambda_j:=0\quad {\rm for}\quad 0 \leq j \leq n_T-1,
\end{eqnarray*}
one has
\begin{eqnarray}
\label{E:3.1.16}
\|\partial_x{\varepsilon}^{n}\| \leq C_2 \tau^{\tilde{q}}.
\end{eqnarray}

We can similarly derive an estimate of $\|\partial^2_x{\varepsilon}^{n}\|$ by multiplying \eqref{E:3.1.4} by $-2{D}^{(\beta)}_{\tau}\left(\partial_x^2{\varepsilon}^{n}\right)$ and integrating the result over $I$. Similar to   \eqref{E:3.1.16}, by Young inequality, \eqref{E:3.5}, \eqref{E:3.1.9}, \eqref{E:3.1.10}, \eqref{E:3.1.13} and \eqref{E:3.1.16}, we can get
\begin{eqnarray}
\label{E:3.1.17}
\|\partial^2_x{\varepsilon}^{n}\| \leq C_3 \tau^{\tilde{q}}.
\end{eqnarray}

Combing \eqref{E:3.1.13}, \eqref{E:3.1.16} and \eqref{E:3.1.17}, we obtain
\begin{eqnarray}
\label{E:E:3.1.18}
\|{\varepsilon}^{n}\|_{H^2} \leq C_*\tau^{\tilde{q}},
\end{eqnarray}
where $C_*=\sqrt{C_1^2+C_2^2+C_3^2}$ is a constant independent of $N$ and $\tau$.

Moreover, we can derive that
\begin{eqnarray*}
\|U^{n}\|_{L^{\infty}} &\leq& \|u^{n}\|_{L^{\infty}}+C_I\|\varepsilon^{n}\|_{H^2} \leq \|u^{n}\|_{L^{\infty}}+C_*C_I\tau \leq K_1,\\
\|{D}^{(\beta)}_{\tau} U^{n}\|_{H^2} &\leq& \|{D}^{(\beta)}_{\tau} u^{n}\|_{H^2}+\|{D}^{(\beta)}_{\tau} \varepsilon^{n}\|_{H^2} \leq \|{D}^{(\beta)}_{\tau} u^{n}\|_{H^2}+C_*{\tau}^{\tilde{q}-\beta}\leq  K_1,
\end{eqnarray*}
where $\tau \leq \tau_0= \min \left\{(C_*C_I)^{-1},C_*^{\frac{1}{1-\beta}}\right\}$, and \eqref{E:2.2} is used. Thus, the proof is completed.
\end{proof}

\subsection{An  error estimate of the space discrete system}
The weak form of time-discrete system \eqref{E:3.1.1} satisfies
\begin{eqnarray}
\label{E:3.2.1}
\left({D}^{(\beta)}_{\tau} U^k,v\right)=\mu \left(\partial^2_x U^{k},v\right)+\left(f(U^{k-1}),v\right),\qquad v \in H^2(I).
\end{eqnarray}
Let
$$U^k_*=\Pi_N^{1,0}U^k, \quad {\bar e}^k=U^k_*-u_N^k, \quad k=1,2,\cdots,n_T.$$
Subtracting \eqref{E:3.7} from \eqref{E:3.2.1}, we have
\begin{eqnarray}
\label{E:3.2.2}
\left({D}^{(\beta)}_{\tau} {\bar e}^k,v\right)+\mu \left(\partial_x {\bar e}^{k},\partial_x v\right)=\left(f(U^{k-1})-f(u_N^{k-1}),v\right)+\left(R_2^k,v\right),
\end{eqnarray}
where
$$
R_2^k={D}^{(\beta)}_{\tau} (U_*^k-U^k).
$$

It is easy to obtain $\left\|\Pi_N^{1,0}v\right\|_{L^{\infty}}\leq C\|v\|_{H^2}$ for any $v \in H^2(I)$. By Theorem \ref{thm:3.1}, one has
$$\|U_*^k\|_{L^{\infty}}\leq C\|U^k\|_{H^2} \leq C,\qquad k=1,2,\cdots,n_T.$$
Then, we   define
$$K_2=\max_{1\leq k \leq n_T}\|U_*^k\|_{L^{\infty}}+1.$$

Now, we are ready to give an error estimate of $\|U^k-u_N^k\|$.
\begin{thm}\label{thm:3.2} Suppose that $r \geq 1$, $u_N^k$ and $U^k~(k=1,2,\cdots,n_T)$ are solutions of \eqref{E:3.7} and \eqref{E:3.1.1}, respectively.
Assume that $U^k\in H^2(I)\cap H^1_0(I)$, $|f'(z)|$ is bounded for $|z|\leq K_1$, $f\in H^1(I)$, and $U^0 \in H^2(I)$.
Then there exists a  positive constant $N_0^*$ such that
when $N \geq N_0^*$, it holds
\begin{eqnarray}
\|U^k-u_N^k\| &\leq& N^{-\frac{3}{2}},\label{E:3.2.3}\\
\|u_N^k\|_{L^{\infty}} &\leq& K_2.\label{E:3.2.4}
\end{eqnarray}
\end{thm}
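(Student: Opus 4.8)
The plan is to argue by mathematical induction on the time level $k$, proving \eqref{E:3.2.3} and \eqref{E:3.2.4} simultaneously; this is the temporal-spatial error splitting mechanism, in which the uniform $L^{\infty}$ bound \eqref{E:3.2.4} is exactly what licenses the local Lipschitz estimate for $f$. For $k=0$ the bound \eqref{E:3.2.3} is immediate, since $\bar e^0 = \Pi_N^{1,0}u_0 - I_N u_0$, whose $L^2$ norm is $O(N^{-2})$ by Lemmas \ref{lem2.4} and \ref{lem2.5} together with the regularity of $u_0$, while \eqref{E:3.2.4} follows from the definition of $K_2$. Assuming that \eqref{E:3.2.3} and \eqref{E:3.2.4} hold for all levels $\leq n-1$, I obtain in particular $\|u_N^{k-1}\|_{L^{\infty}} \leq K_2$ for $k \leq n$, which, together with $\|U^{k-1}\|_{L^{\infty}} \leq K_1$ from Theorem \ref{thm:3.1}, makes the Lipschitz bound $\|f(U^{k-1})-f(u_N^{k-1})\| \leq C\|U^{k-1}-u_N^{k-1}\|$ available for every $k \leq n$.

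The core of the argument is an energy estimate applied to the error equation \eqref{E:3.2.2}. Taking $v = 2\bar e^k$ and invoking the discrete fractional product rule already established in \eqref{E:2.2.3}, so that $({D}^{(\beta)}_{\tau}\bar e^k, 2\bar e^k) \geq {D}^{(\beta)}_{\tau}\|\bar e^k\|^2$, while discarding the nonnegative coercive term $2\mu\|\partial_x \bar e^k\|^2$, I reach
\[ {D}^{(\beta)}_{\tau}\|\bar e^k\|^2 \leq 2\|f(U^{k-1})-f(u_N^{k-1})\|\,\|\bar e^k\| + 2\|R_2^k\|\,\|\bar e^k\|. \]
For the nonlinear term I split $U^{k-1}-u_N^{k-1} = (U^{k-1}-U_*^{k-1}) + \bar e^{k-1}$ and use the projection estimate $\|U^{k-1}-U_*^{k-1}\| \leq CN^{-2}$ from Lemma \ref{lem2.4} together with Theorem \ref{thm:3.1}, giving $\|f(U^{k-1})-f(u_N^{k-1})\| \leq C\|\bar e^{k-1}\| + CN^{-2}$; a Young inequality turns the nonlinear contribution into $C\|\bar e^k\|^2 + C\|\bar e^{k-1}\|^2 + CN^{-2}\|\bar e^k\|$. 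For the consistency term I note that $\Pi_N^{1,0}$ is linear and time-independent, hence commutes with ${D}^{(\beta)}_{\tau}$, so that $R_2^k = -(I-\Pi_N^{1,0}){D}^{(\beta)}_{\tau}U^k$ and therefore $\|R_2^k\| \leq CN^{-2}\|{D}^{(\beta)}_{\tau}U^k\|_{H^2} \leq CN^{-2}$ by \eqref{E:3.1.6}. This recasts the estimate in the exact form \eqref{E:2.1.6} with $v^k = \|\bar e^k\|$, $\lambda_0 = \lambda_1 = C$, $\lambda_j = 0\ (j\geq 2)$, $g^k = CN^{-2}$ and $\theta = 0$. Applying Theorem \ref{thm:2.1} up to level $n$ (the smallness of $\tau$ in \eqref{E:2.1.5} being a fixed bound independent of $N$) and using the fifth line of Lemma \ref{lem2.2} to bound $\sum_{j}P_{m-j}g^j \leq CN^{-2}T^{\beta}/\beta$, I conclude $\|\bar e^k\| \leq C_0 N^{-2}$ for all $k \leq n$, with $C_0$ independent of $N$, $\tau$ and $k$.

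It remains to close the induction at level $n$. The triangle inequality and the projection estimate give $\|U^n-u_N^n\| \leq \|U^n-U_*^n\| + \|\bar e^n\| \leq CN^{-2} \leq N^{-3/2}$ once $N$ exceeds a threshold $N_0^{(1)}$, proving \eqref{E:3.2.3}. For \eqref{E:3.2.4} the inverse inequality \eqref{E:3.2} is decisive: $\|\bar e^n\|_{L^{\infty}} \leq \tfrac{N+1}{\sqrt{2}}\|\bar e^n\| \leq \tfrac{N+1}{\sqrt{2}}C_0 N^{-2} = O(N^{-1})$, so that $\|u_N^n\|_{L^{\infty}} \leq \|U_*^n\|_{L^{\infty}} + \|\bar e^n\|_{L^{\infty}} \leq (K_2-1)+1 = K_2$ as soon as $N \geq N_0^{(2)}$; taking $N_0^* = \max\{N_0^{(1)}, N_0^{(2)}\}$ completes the induction. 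The main obstacle is precisely this self-improvement mechanism: one must prove the sharper rate $\|\bar e^n\| = O(N^{-2})$, strictly better than the advertised $N^{-3/2}$, so that the factor $N$ lost to the inverse inequality is absorbed and the $L^{\infty}$ bound regenerates itself. This spare half-power is exactly what decouples $\tau$ from $N$ and yields the unconditional estimate; the remaining technical care goes into controlling $R_2^k$ by commuting the projector past the discrete fractional operator and into checking that the Grönwall forcing stays at order $N^{-2}$ uniformly in $k$.
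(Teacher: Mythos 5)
Your proof is correct and follows essentially the same route as the paper: induction on the time level, the inverse inequality \eqref{E:3.2} to recover the $L^{\infty}$ bound from the induction hypothesis, the energy estimate with $v=2\bar e^k$, the splitting $U^{k-1}-u_N^{k-1}=(U^{k-1}-U_*^{k-1})+\bar e^{k-1}$ together with the projection bound on $R_2^k$, and Theorem \ref{thm:2.1} to obtain $\|\bar e^k\|\leq CN^{-2}$ before closing the induction. The only (cosmetic) difference is that you keep the forcing in the form $CN^{-2}\|\bar e^k\|$ so as to match \eqref{E:2.1.6} literally, whereas the paper applies Young's inequality throughout and ends with $+CN^{-4}$; both yield the same conclusion.
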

\begin{proof} We prove \eqref{E:3.2.3} by using the mathematic induction method.
From Lemma \ref{lem2.5}, one has
$$\|U^0-u_N^0\|=\|u_0-I_Nu_0\| \leq C_4N^{-2} \leq N^{-\frac{3}{2}}$$
when $N\geq (C_4)^2$. Assume that \eqref{E:3.2.3} holds for $k \leq n-1$. By the inverse inequality \eqref{E:3.2}, we have
\begin{eqnarray*}
\|u_N^k\|_{L^{\infty}}\leq \|U_*^k\|_{L^{\infty}}+\|U_*^k-u_N^k\|_{L^{\infty}} \leq \|U_*^k\|_{L^{\infty}}+\frac{N+1}{\sqrt{2}}\|{\bar e}^k\| \leq \|U_*^k\|_{L^{\infty}}+C_5N^{-\frac{1}{2}} \leq K_2,
\end{eqnarray*}
when $N\geq (C_5)^2$.

By \eqref{E:3.1.6} and the assumption,
$\|U^{k}\|_{L^{\infty}}$ and $\|u_N^{k}\|_{L^{\infty}}$
are bounded for $k\leq n-1$. Therefore,   $f'(\xi)$ is bounded when
 $|\xi|\leq \max\{2K_1,K_2\}$. Combining Lemma \ref{lem2.4} and the boundedess of $f'(\xi)$
 yields
\begin{eqnarray}
\label{E:3.2.5}
\left\|f(U^{k-1})-f(u_N^{k-1})\right\|= \|f'(\xi)(u_N^{k-1}-U^{k-1})\|
\leq C\left\|U^{k-1}-u_N^{k-1}\right\|
\leq C\|{\bar e}^{k-1}\|+CN^{-2}.
\end{eqnarray}
By Lemma \ref{lem2.4} and Theorem \ref{thm:3.1}, we obtain
\begin{eqnarray}
\label{E:3.2.6}
\|R_2^n\|^2 \leq CN^{-4}\left\|{D}^{(\beta)}_{\tau}U^k \right\|_{H^2} \leq CN^{-4}.
\end{eqnarray}

Letting $k\leq n$ and $v=2{\bar e}^{k}$ in \eqref{E:3.2.2}, we have
\begin{eqnarray*}
\left({D}^{(\beta)}_{\tau} {\bar e}^k,2{\bar e}^k\right)+2\mu
\left(\partial_x {\bar e}^{k},\partial_x {\bar e}^k\right)
=2\left(f(U^{k-1})-f(u_N^{k-1}),
{\bar e}^k\right)+2\left(R_2^k,{\bar e}^k\right).
\end{eqnarray*}
Combing \eqref{E:3.2.5}, \eqref{E:3.2.6}, and the above inequality yields
\begin{eqnarray}
\label{E:3.2.7}
{D}^{(\beta)}_{\tau} \left(\|{\bar e}^k\|^2\right)
& \leq & 2\left(f(U^{k-1})-f(u_N^{k-1}),
{\bar e}^k\right)+2\left(R_2^k,{\bar e}^k\right) \nonumber \\
& \leq & \left(\left\|f(U^{k-1})-f(u_N^{k-1})\right\|^2
+\|{\bar e}^k\|^2\right)+\left(\|R_2^k\|^2+\|{\bar e}^k\|^2\right) \nonumber \\
& \leq &2\|{\bar e}^k\|^2+C\|{\bar e}^{k-1}\|^2+CN^{-4},{\quad} k\leq n.
\end{eqnarray}
Applying Theorem \ref{thm:2.1} yields $\|{\bar e}^k\| \leq CN^{-2}(k\leq n)$, which leads to
\begin{eqnarray}
\label{E:3.2.9}
\|U^n-u_N^n\|\leq \|U^n-U_*^n\|+\|{\bar e}^n\| \leq C_6N^{-2} \leq N^{-\frac{3}{2}},
\end{eqnarray}
when $N \geq (C_6)^2$. That is to say, \eqref{E:3.2.3} holds for $k=n$. Furthermore, we have
\begin{eqnarray}
\label{E:3.2.10}
\|u_N^n\|_{L^{\infty}}
 \leq  \|U_*^n\|_{L^{\infty}}+\|{\bar e}^n\|_{L^{\infty}}
 \leq  \|U_*^k\|_{L^{\infty}}+\frac{N+1}{\sqrt{2}}\|{\bar e}^n\|
 \leq \|U_*^k\|_{L^{\infty}}+C_5N^{-\frac{1}{2}}\leq K_2,
\end{eqnarray}
when $N \geq (C_5)^2$. Letting $N_0^*=\left\lceil\max\left\{(C_4)^2,(C_5)^2,(C_6)^2\right\}\right\rceil$
completes the proof.
\end{proof}

\subsection{Error estimate of the fully discrete system}
By the boundedness of $u_N^k$ and Theorem \ref{thm:2.2}, we immediately obtain the following result.
\begin{thm}\label{thm:3.3} Suppose that $r \geq 1$, $u$ and $u_N^k~(k=1,2,\cdots,n_T)$ are solutions of \eqref{E:1.1} and \eqref{E:3.7}, respectively. If $m \geq r+1, u\in C(0,T; H^m(I)\cap H^1_0(I))$ and satisfies \eqref{solu:u}, $|f'(z)|$ is bounded for $|z|\leq K_1$, $f\in H^1(I)$, and $u_0 \in H^m(I)$.  Then, there exist two positive constants $\tau^*_0$ and $N^*_0$ such that
when $\tau \leq \tau^*_0$ and $N \geq N^*_0$, it holds
\begin{eqnarray}
\label{E:3.3.1}
\|u^k-u_N^k\| \leq C\left(\tau^{\min\{\sigma-\beta,1\}}+N^{-r}\right).
\end{eqnarray}
\end{thm}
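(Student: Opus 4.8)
The plan is to prove \eqref{E:3.3.1} by the temporal-spatial error splitting, treating Theorems \ref{thm:3.1} and \ref{thm:3.2} as the two halves that have already done the essential work. I decompose the total error through the time-discrete solution $U^k$ of \eqref{E:3.1.1},
$$
u^k-u_N^k=(u^k-U^k)+(U^k-u_N^k)=\varepsilon^k+(U^k-u_N^k),
$$
so that the temporal and spatial contributions can be bounded independently. The first summand is purely temporal and is controlled by Theorem \ref{thm:3.1}: for $\tau\le\tau_0^*$ we have $\|\varepsilon^k\|\le\|\varepsilon^k\|_{H^2}\le C_*\tau^{\tilde q}$ with $\tilde q=\min\{\sigma-\beta,1\}$, which already supplies the claimed temporal order. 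The second summand is the Galerkin spectral error for the time-discrete problem, and Theorem \ref{thm:3.2} furnishes exactly what is needed: for $N\ge N_0^*$ it gives both the spatial smallness of $U^k-u_N^k$ and, crucially, the uniform $L^\infty$ bound $\|u_N^k\|_{L^\infty}\le K_2$.

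The step I would carry out first is to exploit this uniform boundedness. Since $\|u_N^{k}\|_{L^\infty}\le K_2$ and $\|U^k\|_{L^\infty}$ is bounded by Theorem \ref{thm:3.1}, the argument of $f$ stays in a fixed ball, so $|f'(\xi)|\le C$ with $C$ independent of $N$ and $\tau$; the semi-implicit nonlinearity then behaves precisely like the Lipschitz source term of the linear scheme treated in Theorem \ref{thm:2.2}. With the Lipschitz constant frozen, I would return to the error equation \eqref{E:3.2.2} for $\bar e^k=U_*^k-u_N^k$, estimate $\|f(U^{k-1})-f(u_N^{k-1})\|\le C\|\bar e^{k-1}\|+CN^{-r}$ via Lemma \ref{lem2.4} and the uniform bound on $f'$, and estimate the projection-consistency term $R_2^k={D}^{(\beta)}_{\tau}(U_*^k-U^k)$ by $CN^{-r}$ using Lemma \ref{lem2.4} at the full regularity order. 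Testing with $v=2\bar e^k$ and applying Young's inequality yields an inequality of the form
$$
{D}^{(\beta)}_{\tau}\!\left(\|\bar e^k\|^2\right)\le C\|\bar e^k\|^2+C\|\bar e^{k-1}\|^2+\|\bar e^k\|\,CN^{-r},
$$
to which Theorem \ref{thm:2.1} applies with $v^k=\|\bar e^k\|$, $\lambda_0=\lambda_1=C$, and $g^k=CN^{-r}$. Using the fifth line of Lemma \ref{lem2.2} to bound $\sum_{j}P_{m-j}g^{j}$ by $CN^{-r}$, this gives $\|\bar e^k\|\le CN^{-r}$, and hence $\|U^k-u_N^k\|\le\|U^k-U_*^k\|+\|\bar e^k\|\le CN^{-r}$ by Lemma \ref{lem2.4}. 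The triangle inequality then combines the two halves into
$$
\|u^k-u_N^k\|\le\|\varepsilon^k\|+\|U^k-u_N^k\|\le C\tau^{\min\{\sigma-\beta,1\}}+CN^{-r},
$$
which is \eqref{E:3.3.1}.

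I expect the genuine difficulty to be conceptual rather than computational, and it is exactly the one the splitting is designed to remove: a direct estimate of $u^k-u_N^k$ would force me to control $f(u_N^{k-1})$ through a local Lipschitz constant depending on $\|u_N^{k-1}\|_{L^\infty}$, which the inverse inequality \eqref{E:3.2} only bounds by $\tfrac{N+1}{\sqrt2}\|u_N^{k-1}\|$, an $N$-dependent factor that would obstruct any unconditional estimate. Routing the error through $U^k$ decouples this: Theorem \ref{thm:3.1} bounds $\varepsilon^k$ in the strong $H^2$ norm, hence in $L^\infty$ independently of $N$, so $U^k$ inherits the uniform bound of the exact solution, and Theorem \ref{thm:3.2} bootstraps this by induction into the uniform $L^\infty$ bound on $u_N^k$. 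Once that bound is in hand the Lipschitz constant is genuinely fixed, and the discrete fractional Gr\"onwall inequality of Theorem \ref{thm:2.1} closes the argument exactly as in the linear analysis underlying Theorem \ref{thm:2.2}. Because Theorems \ref{thm:3.1} and \ref{thm:3.2} are already established, the remaining combination is routine, consistent with the statement that the result is obtained immediately from the boundedness of $u_N^k$ and Theorem \ref{thm:2.2}.
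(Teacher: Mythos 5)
Your overall strategy (use the temporal--spatial splitting only to secure boundedness, then close with the discrete fractional Gr\"onwall inequality) is the right one, but the way you assemble the final estimate has a genuine gap: you bound the spatial half by $\|U^k-u_N^k\|\le CN^{-r}$, and to do so you invoke Lemma \ref{lem2.4} ``at the full regularity order'' for $U^k-U_*^k$ and for $R_2^k={D}^{(\beta)}_{\tau}(U_*^k-U^k)$. But the time-discrete solution $U^k$ is only controlled in $H^2$: Theorem \ref{thm:3.1} gives $\|U^k\|_{H^2}\le K_1$ and $\|{D}^{(\beta)}_{\tau}U^k\|_{H^2}\le 2K_1$ and nothing more, so Lemma \ref{lem2.4} yields only $\|U^k-U_*^k\|\le CN^{-2}$ and $\|R_2^k\|\le CN^{-2}$. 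This is precisely why Theorem \ref{thm:3.2} is content with the suboptimal bound $N^{-3/2}$: its sole purpose is to deliver the uniform bound $\|u_N^k\|_{L^\infty}\le K_2$. Consequently your route through $U^k$ proves \eqref{E:3.3.1} only for $1\le r\le 2$; for $r>2$, which the hypotheses $m\ge r+1$ and $u\in C(0,T;H^m(I))$ explicitly allow, the chain $\|u^k-u_N^k\|\le\|\varepsilon^k\|+\|U^k-u_N^k\|$ cannot produce the rate $N^{-r}$.

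The paper's (one-line) argument avoids this: once $\|u_N^k\|_{L^\infty}\le K_2$ is in hand from Theorem \ref{thm:3.2}, the Lipschitz constant of $f$ is frozen, and one returns to the \emph{direct} error equation between $u^k$ and $u_N^k$, decomposed through $u_*^k=\Pi_N^{1,0}u^k$ exactly as in Theorem \ref{thm:2.3}. There the projection and consistency terms are measured against the exact solution $u$, which carries the full $H^m$ regularity, so Lemmas \ref{lem2.4} and \ref{lem2.5} give $O(N^{-r})$; the nonlinear term contributes $C\|e^{k-1}\|+CN^{-r}+C\tau^{\tilde q}$ (the Lipschitz bound plus the extrapolation error $f(u^k)-f(u^{k-1})=O(\tau^{\tilde q})$), and Theorem \ref{thm:2.1} (equivalently the stability Theorem \ref{thm:2.2}) closes the estimate at the optimal rate $C\left(\tau^{\min\{\sigma-\beta,1\}}+N^{-r}\right)$. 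Your proof needs this final redirection; everything before it is sound.
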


\begin{rem}\label{rem:3.1}
We can also extend the method \eqref{E:2.6} with $p=2$ to the nonlinear equation \eqref{E:1.1},
and the nonlinear term $f(u_N^k)$ is approximated by a second-order extrapolation
$f(2u_N^{k-1}-u_N^{k-2})$. The numerical method is given by: find $u_N^k\in V_N^0$ for $k\geq2$ such that
\begin{equation}\label{E:3.3.5}
\left\{\begin{array}{ll}
\left({D}^{(\beta)}_{\tau} u_N^k,v\right)+ \mu \left(\partial_x u_N^{k-\beta/2},\partial_x v\right)=\left((1-\frac{\beta}{2})f(2u_N^{k-1}-u_N^{k-2})+\frac{\beta}{2}f(u_N^{k-1}),v\right),\quad \forall v \in V_N^0,\\ [0.1cm]
u^0=I_N u_0,
\end{array}\right.
\end{equation}
where $u_N^1$ can be derived by the fully implicit method or \eqref{E:3.7} with a smaller
step size.
\end{rem}
The stability and convergence analysis of the method \eqref{E:3.3.5}
is similar to that of  \eqref{E:2.6}.
\begin{thm}\label{thm:3.4} Suppose that $r \geq 1$, $u$ and $u_N^k~(k=1,2,\cdots,n_T)$ are
solutions of \eqref{E:1.1} and \eqref{E:3.3.5}, respectively. Assume that
$m \geq r+1, u\in C(0,T; H^m(I)\cap H^1_0(I))$ and satisfies \eqref{solu:u},
$|f'(z)|$ and $|f''(z)|$ are bounded for $|z|\leq K_1$, $f\in H^1(I)$, and $u_0 \in H^m(I)$.
Then, there exist two  positive constants $\tau_1^*, N^*_1$ such that when $\tau \leq \tau^*_1$ and $N \geq N^*_1$,  it holds
\begin{eqnarray}
\|u^k-u_N^k\| \leq C\left(\tau^{\min\{\sigma-\beta,2\}}+N^{-r}\right).\label{E:3.3.6}
\end{eqnarray}
\end{thm}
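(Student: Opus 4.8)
The plan is to reproduce the temporal--spatial error splitting of Theorems \ref{thm:3.1}--\ref{thm:3.3}, adapted to the $p=2$ time stepping and the second-order extrapolation in \eqref{E:3.3.5}. First I would introduce the associated time-discrete system: find $U^k$ with $U^0=u_0$, a startup value $U^1$ supplied by a sufficiently accurate one-step solver, and for $k\ge 2$
\[
{D}^{(\beta)}_{\tau} U^k=\mu\,\partial^2_x U^{k-\beta/2}+\Big(1-\tfrac{\beta}{2}\Big)f\big(2U^{k-1}-U^{k-2}\big)+\tfrac{\beta}{2}f\big(U^{k-1}\big).
\]
Setting $\varepsilon^k=u^k-U^k$, the consistency error splits into the GNGF time-discretization error, which is $O(\tau^{\min\{\sigma-\beta,2\}})$ by \cite{zeng2013use}, plus the extrapolation error of the nonlinear term. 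Expanding about $t_{k-\beta/2}$ shows the weights $1-\tfrac{\beta}{2}$ and $\tfrac{\beta}{2}$ exactly annihilate the first-order term, so that $(1-\tfrac{\beta}{2})f(2u^{k-1}-u^{k-2})+\tfrac{\beta}{2}f(u^{k-1})=f(u^{k-\beta/2})+O(\tau^2)$ once $|f''|$ is bounded; hence the time truncation error satisfies $R_1^k=O(\tau^{\min\{\sigma-\beta,2\}})$.

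Next I would prove the analogue of Theorem \ref{thm:3.1} by induction on $k$. Assuming $\|\varepsilon^k\|_{H^2}\le C_*\tau^{\min\{\sigma-\beta,2\}}$ for $k\le n-1$, the inverse inequality \eqref{E:3.3} bounds $\|U^k\|_{L^\infty}$ and $\|U^k\|_{H^2}$ by $K_1$, so $f'$ and $f''$ may be treated as bounded along the trajectory. Testing the error equation with $2\varepsilon^{n-\beta/2}$ and invoking the shifted positivity \eqref{E:2.2.7}, namely $({D}^{(\beta)}_{\tau}\varepsilon^n,2\varepsilon^{n-\beta/2})\ge {D}^{(\beta)}_{\tau}(\|\varepsilon^n\|^2)$, yields
\[
{D}^{(\beta)}_{\tau}\big(\|\varepsilon^n\|^2\big)\le C\big(\|\varepsilon^n\|^2+\|\varepsilon^{n-1}\|^2+\|\varepsilon^{n-2}\|^2\big)+2\|\varepsilon^{n-\beta/2}\|\,\|R_1^n\|,
\]
where the two lagged quadratic terms arise from Lipschitz-estimating $f(2u^{k-1}-u^{k-2})-f(2U^{k-1}-U^{k-2})$ by $C(\|\varepsilon^{n-1}\|+\|\varepsilon^{n-2}\|)$. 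Applying Theorem \ref{thm:2.1} with $\theta=\beta/2$, $v^n=\|\varepsilon^n\|$, $g^{n-\beta/2}=2\|R_1^n\|$ and $\lambda_0=\lambda_1=\lambda_2=C$ (the remaining $\lambda_j=0$) gives $\|\varepsilon^n\|\le C\tau^{\min\{\sigma-\beta,2\}}$; the $H^1$ and $H^2$ bounds then follow by testing with suitable multiples of ${D}^{(\beta)}_{\tau}\varepsilon^n$ and ${D}^{(\beta)}_{\tau}(\partial_x^2\varepsilon^n)$ as in \eqref{E:3.1.14}--\eqref{E:3.1.17}, again using \eqref{E:2.2.7}, which closes the induction and yields $\|U^k\|_{L^\infty}+\|{D}^{(\beta)}_{\tau}U^k\|_{H^2}\le 2K_1$.

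The space-discrete step repeats Theorem \ref{thm:3.2}. With $U_*^k=\Pi_N^{1,0}U^k$ and $\bar e^k=U_*^k-u_N^k$, the error equation carries $R_2^k={D}^{(\beta)}_{\tau}(U_*^k-U^k)$ and a nonlinear defect bounded, via Lemma \ref{lem2.4} and the boundedness of $f'$, by $C(\|\bar e^{k-1}\|+\|\bar e^{k-2}\|)+CN^{-2}$. Testing with $2\bar e^{k-\beta/2}$, using \eqref{E:2.2.7} and Theorem \ref{thm:2.1} with $\lambda_0=\lambda_1=\lambda_2=C$ gives $\|\bar e^k\|\le CN^{-2}$, so the inverse inequality \eqref{E:3.2} forces $\|u_N^k\|_{L^\infty}\le K_2$ and the induction on $N$ closes for $N\ge N_1^*$. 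With $\|u_N^k\|_{L^\infty}$ now uniformly bounded, $f$ is globally Lipschitz along the scheme, and exactly as in Theorem \ref{thm:3.3} I would apply the linear machinery of Theorem \ref{thm:2.2} (via the discrete Gr\"onwall of Theorem \ref{thm:2.1}) to the full error equation for $u^k-u_N^k$: the temporal defect contributes $O(\tau^{\min\{\sigma-\beta,2\}})$, the projection/interpolation terms contribute $O(N^{-r})$ through Lemmas \ref{lem2.4} and \ref{lem2.5}, and the bounded Lipschitz terms enter as finitely many nonnegative $\lambda_j$, delivering \eqref{E:3.3.6}.

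I expect the crux to be twofold. The first difficulty is verifying the second-order consistency of the extrapolated nonlinear term: the cancellation of the first-order Taylor term is what forces the precise weights $1-\tfrac{\beta}{2}$ and $\tfrac{\beta}{2}$ and what makes the hypothesis on $|f''|$ indispensable. The second is ensuring that the startup value $u_N^1$ (and $U^1$) is computed accurately enough---e.g.\ by \eqref{E:3.7} on a refined grid---so that the first-step error does not degrade the global $O(\tau^2)$ rate. The presence of two lagged error terms is otherwise harmless, since Theorem \ref{thm:2.1} accommodates any finite set of nonnegative coefficients $\lambda_j$ with $\sum_j\lambda_j$ bounded independently of $\tau$.
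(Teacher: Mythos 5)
Your proposal is correct and follows exactly the route the paper intends: the paper gives no proof of Theorem \ref{thm:3.4}, stating only that the analysis is ``similar to'' that of the $p=1$ case, and your reconstruction is the faithful adaptation of Theorems \ref{thm:3.1}--\ref{thm:3.3} to the shifted scheme, using \eqref{E:2.2.7} in place of \eqref{E:2.2.3} and the Gr\"onwall inequality with $\theta=\beta/2$ and the extra lagged coefficient $\lambda_2$. Your two flagged crux points --- the cancellation of the first-order Taylor term by the weights $1-\beta/2$ and $\beta/2$ (which is where boundedness of $|f''|$ enters), and the need for a sufficiently accurate startup value $u_N^1$ --- are precisely the issues the paper acknowledges only implicitly in Remark \ref{rem:3.1}.
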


\section{Numerical results}
\setcounter{equation}{0}

In this section, a numerical example is presented to illustrate the proposed method.

Consider the model problem \eqref{E:1.1} with $\mu=1$ and $f(x,t,u)=u+u^2$. The initial condition is chosen as
$$u_0(x)=\sin(2\pi x).$$

Since the exact solution of the problem is unknown, the reference solutions are derived by setting $N=2^9, \tau=1/2^{12}$. In Table \ref{s4:tb1}, we list the $L^2$-errors and convergence rates of the method \eqref{E:3.7} in temporal direction with $N=2^9$ and different $\beta$. From Table \ref{s4:tb1}, we can observe the first-order accuracy in time at $t=1$ for $\beta=0.2$ and $\beta=0.9$. In Table \ref{s4:tb2}, we show the $L^2$-errors and convergence rates
of the method \eqref{E:3.3.5} in temporal direction for  $N=2^9$. From Table \ref{s4:tb2}, a second convergence order is obtained for $\beta=0.9$
due to relatively good regularity of the solution. However, we do  not observe second-order accuracy  for $\beta=0.2$ due to slightly stronger singularity of the solution, but
second-order convergence can be recovered by adding the correction terms, which is not
investigated here; see \cite{Lubich1986Discretized,zeng2017second}.

\begin{table}[!h]
\caption{The $L^2$-errors and convergence rate of the method \eqref{E:3.7}  in time.}\label{s4:tb1}
\centering
\begin{tabular}{|c|c|c|c|c|c|c|c|c|c|c|c|c|}
\hline
 $\tau$ & $\beta=0.2$& Order& $\beta=0.9$ & Order  \\
 \hline
$2^{-5}$&3.6747e-3&     &1.85544e-2&    \\
$2^{-6}$&1.7904e-3&1.03 &9.22270e-3&1.00\\
$2^{-7}$&8.7440e-4&1.03 &4.54197e-3&1.02\\
$2^{-8}$&4.2187e-4&1.05 &2.19854e-3&1.04\\
$2^{-9}$&1.9670e-4&1.10 &1.02610e-3&1.09\\
\hline
\end{tabular}
\end{table}

\begin{table}[!h]
\caption{The $L^2$-errors and convergence rate of the method \eqref{E:3.3.5} in time.}\label{s4:tb2}
\centering
\begin{tabular}{|c|c|c|c|c|c|c|c|c|c|c|c|c|}
\hline
 $\tau$ & $\beta=0.2$& Order& $\beta=0.9$ & Order \\
 \hline
$2^{-5}$&7.9765e-4&     &4.9601e-03&    \\
$2^{-6}$&3.6951e-4&1.11 &1.2854e-03&1.94\\
$2^{-7}$&1.7264e-4&1.09 &3.3088e-04&1.95\\
$2^{-8}$&7.9981e-5&1.11 &8.4300e-05&1.97\\
$2^{-9}$&3.5914e-5&1.15 &2.1181e-05&1.99\\
\hline
\end{tabular}
\end{table}

\section{Conclusion}
A discrete fractional Gr{\"o}nwall inequality for convolution quadrature with the convolution coefficients generated by the generating function is developed. We illustrate its use through the stability and convergence analysis of the Galerkin spectral method for the linear time-fractional subdiffusion equations. We then combined the discrete fractional Gr{\"o}nwall inequality and the temporal-spatial error splitting argument \cite{li2012mathematical} to prove the unconditional convergence of the Galerkin spectral method  for the nonlinear time-fractional subdiffusion equation.

We only developed a discrete fractional Gr{\"o}nwall inequality for the convolution quadrature
with the coefficients generated by the generalized Newton--Gregory formula of order up to
order two \cite{zeng2013use}. How to construct a discrete fractional Gr{\"o}nwall inequality for other convolution quadratures (see, e.g., \cite{Lubich1986Discretized}) of high-order accuracy will be considered in our future work.
It will be interesting to consider the discrete fractional Gr{\"o}nwall inequality for analyzing the numerical methods for multi-term nonlinear time-fractional differential equations \cite{zeng2017second}.


\section*{Acknowledgements}
 The authors wish to thank the referees for their constructive
comments and suggestions, which greatly improved the quality of this paper.


\end{document}